\newtheorem{guia}{}
\newtheorem{rem}{}
\newtheorem{teorema}[guia]{Theorem}
\newtheorem{lema}[guia]{Lemma}
\newtheorem{obss}[rem]{\it Remarks}
\newcommand{\al}{\alpha}
\newcommand{\be}{\beta}
\newcommand{\De}{\Delta}
\newcommand{\de}{\delta}
\newcommand{\ds}{\displaystyle}
\newcommand{\e}{\varepsilon}
\newcommand{\la}{\lambda}
\newcommand{\La}{\Lambda}
\newcommand{\N}{\mathbb N}
\newcommand{\Om}{\Omega}
\newcommand{\Omb}{\overline{\Omega}}
\newcommand{\p}{\partial}
\newcommand{\R}{\mathbb R}
\begin{document}

\title[A priori bounds and existence of solutions]{A priori bounds and existence of solutions
for some nonlocal elliptic problems}

\author[B. Barrios, L. Del Pezzo, J. Garc\'{\i}a-Meli\'{a}n and A. Quaas]
{B. Barrios, L. del Pezzo, J. Garc\'{\i}a-Meli\'{a}n\\ and A. Quaas}

\date{}

\address{B. Barrios \hfill\break\indent
Department of Mathematics, University of Texas at Austin
\hfill\break\indent Mathematics Dept. RLM 8.100 2515 Speedway Stop C1200
\hfill\break\indent  Austin, TX 78712-1202, USA.}
\email{{\tt bego.barrios@uam.es}}

\address{L. Del Pezzo \hfill\break\indent
CONICET  \hfill\break\indent
Departamento de Matem\'{a}tica, FCEyN UBA
\hfill\break\indent Ciudad Universitaria, Pab I (1428)
\hfill\break\indent Buenos Aires,
ARGENTINA. }
\email{{\tt ldpezzo@dm.uba.ar}}

\address{J. Garc\'{\i}a-Meli\'{a}n \hfill\break\indent
Departamento de An\'{a}lisis Matem\'{a}tico, Universidad de La Laguna
\hfill \break \indent C/. Astrof\'{\i}sico Francisco S\'{a}nchez s/n, 38271 -- La Laguna, SPAIN
\hfill\break\indent
{\rm and} \hfill\break
\indent Instituto Universitario de Estudios Avanzados (IUdEA) en F\'{\i}sica
At\'omica,\hfill\break\indent Molecular y Fot\'onica,
Universidad de La Laguna\hfill\break\indent C/. Astrof\'{\i}sico Francisco
S\'{a}nchez s/n, 38203 -- La Laguna, SPAIN.}
\email{{\tt jjgarmel@ull.es}}

\address{A. Quaas\hfill\break\indent
Departamento de Matem\'{a}tica, Universidad T\'ecnica Federico Santa Mar\'{\i}a
\hfill\break\indent  Casilla V-110, Avda. Espa\~na, 1680 --
Valpara\'{\i}so, CHILE.}
\email{{\tt alexander.quaas@usm.cl}}

%%%%%%%%0.Abstract

\begin{abstract}
In this paper we show existence of solutions for some elliptic problems with nonlocal diffusion
by means of nonvariational tools. Our proof is based on the use of topological
degree, which requires a priori bounds for the solutions. We obtain the a priori
bounds by adapting the classical scaling method of Gidas and Spruck. We also
deal with problems involving gradient terms.
\end{abstract}

\maketitle

\section{Introduction}
\setcounter{section}{1}
\setcounter{equation}{0}

Nonlocal diffusion problems have received considerable attention during the last
years, mainly because their appearance when modelling different situations.
To name a few, let us mention anomalous diffusion and quasi-geostrophic flows, turbulence and
water waves, molecular dynamics and relativistic quantum mechanics of stars
(see \cite{BoG,CaV,Co,TZ} and references therein). They also appear in mathematical
finance (cf. \cite{A,Be,CoT}), elasticity problems \cite{signorini}, 
thin obstacle problem \cite{Caf79}, phase transition \cite{AB98, CSM05, SV08b},  crystal dislocation \cite{dfv, toland} 
and stratified materials \cite{savin_vald}.

A particular class of nonlocal operators which have been widely analyzed is given, up to 
a normalization constant, by
$$
(-\De)^s_K u(x) = \int_{\R^N} \frac{2u(x) -u(x+y)-u(x-y)}{|y|^{N+2s}} K(y) dy,
$$
where $s\in (0,1)$ and $K$ is a measurable function defined in $\R^N$ ($N\ge 2$). A remarkable
example of such operators is obtained by setting $K=1$, when $(-\De)^s_K$ reduces to the
well-known fractional Laplacian (see \cite[Chapter 5]{Stein} or \cite{NPV, Landkof, S} 
for further details). Of course, we will
require the operators $(-\De)^s_K$ to be elliptic, which in our context means that there exist positive
constants $\la\le \Lambda$ such that
\begin{equation}\label{elipticidad}
\la \le K(x) \le \Lambda \quad \hbox{in } \R^N
\end{equation}
(cf. \cite{CS}). While there is a large literature dealing with this class of operators,
very little is known about existence of solutions for nonlinear problems, except
for cases where variational methods can be employed (see for instance \cite{barrios2, barrios4, barrios3, SV2, servadeivaldinociBN, servadeivaldinociBNLOW} 
and references therein).

But when the problem under consideration is not of variational type, for instance when
gradient terms are present, as far as we know, results about existence of solutions are very scarce in the
literature.
Thus our objective is to find a way to show existence of solutions for some problems
under this assumption. For this aim, we will resort to the use of the fruitful topological
methods, in particular Leray-Schauder degree.

It is well-known that the use of these methods requires the knowledge of the so-called
a priori bounds for all possible solutions. Therefore we will be
mainly concerned with the obtention of these a priori bounds for a particular class of
equations. A natural starting point for this program is to consider the problem:
\begin{equation}\label{problema}
\left\{
\begin{array}{ll}
(-\De)_K^s u = u^p + g(x,u) & \hbox{in }\Om,\\[0.35pc]
\ \ u=0 & \hbox{in }\R^N \setminus \Om,
\end{array}
\right.
\end{equation}
where $\Om \subset \R^N$ is a smooth bounded domain, $p>1$ and $g$ is a perturbation term which is
small in some sense. Under several expected restrictions on $g$ and $p$ we will show
that all positive solutions of this problem are a priori bounded. The most important
requirement is that $p$ is subcritical, that is
\begin{equation}\label{subcritico}
1<p< \frac{N+2s}{N-2s}
\end{equation}
and that the term $g(x,u)$ is a small perturbation of $u^p$ at infinity. By adapting the
classical scaling method of Gidas and Spruck (\cite{GS}) we can show that all positive solutions 
of \eqref{problema} are a priori bounded.

An important additional assumption that we will be imposing on the kernel $K$ is that
\begin{equation}\label{continuidad}
\lim_{x\to 0} K(x)=1.
\end{equation}

\medskip

It is important to clarify at this moment that we are always dealing with
viscosity solutions $u \in C(\R^N)$
in the sense of \cite{CS}, although in some cases the solutions will turn out to be more regular with the
help of the regularity theory developed in \cite{CS, CS2}.

With regard to problem \eqref{problema}, our main result is the following:

\begin{teorema}\label{th-1}
Assume $\Om$ is a $C^2$ bounded domain of $\R^N$, $N\ge 2$, $s\in (0,1)$ and $p$ verifies
\eqref{subcritico}. Let $K$ be a measurable kernel that satisfies \eqref{elipticidad} and \eqref{continuidad}.
If $g\in C(\Omb \times \R)$ verifies
$$
|g(x,z)| \le C |z|^r \qquad x\in \Omb, \ z\in \R,
$$
where $1<r<p$, then problem \eqref{problema} admits at least a positive
viscosity solution.
\end{teorema}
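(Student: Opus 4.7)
The plan is to combine an a priori $L^\infty$ bound for all positive viscosity solutions of \eqref{problema} with a Leray--Schauder degree argument applied to a suitable compact solution operator. The a priori bound is the essential analytic input; once available, the existence step is a fairly standard topological computation.

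For the a priori bound I would adapt the Gidas--Spruck blow-up method to the nonlocal setting. Argue by contradiction: assume there is a sequence of positive solutions $u_n$ with $M_n:=\|u_n\|_\infty\to\infty$, attained at points $x_n\in\Om$. Set
$$
v_n(y) = \frac{1}{M_n}\,u_n\bigl(x_n + M_n^{-(p-1)/(2s)}y\bigr),
\qquad y\in\Om_n := M_n^{(p-1)/(2s)}(\Om-x_n),
$$
extended by zero outside $\Om_n$. A direct change of variables shows that $v_n$ is a viscosity solution of
$$
(-\De)_{K_n}^s v_n = v_n^p + M_n^{-p}\,g\!\left(x_n+M_n^{-(p-1)/(2s)}y,\,M_n v_n\right)
\quad\text{in }\Om_n,
$$
with rescaled kernel $K_n(y)=K(M_n^{-(p-1)/(2s)}y)$. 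Since $r<p$, the growth assumption on $g$ forces the perturbation to vanish uniformly on compact sets. Subcriticality \eqref{subcritico} keeps the scaling admissible, and the regularity theory of \cite{CS,CS2} yields uniform interior (and, by a standard barrier, boundary) Hölder estimates on $v_n$. Passing to the limit along a subsequence, one gets a nontrivial nonnegative bounded viscosity solution $v$ of $(-\De)^s v = v^p$ on either $\R^N$ or a half-space (with zero exterior data), depending on whether $M_n^{(p-1)/(2s)}\mathrm{dist}(x_n,\p\Om)\to\infty$ or stays bounded. The crucial use of \eqref{continuidad} is here: the scaling factor $M_n^{-(p-1)/(2s)}\to 0$ tests $K$ closer and closer to the origin, so $K_n(y)\to 1$ and the limit operator is the standard fractional Laplacian, for which the known Liouville theorems in the subcritical regime in $\R^N$ and in the half-space rule out such a $v$. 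This contradiction gives the uniform bound $\|u\|_\infty\le C$.

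With this bound in hand, I would define $T\colon C(\Omb)\to C(\Omb)$ by letting $Tf$ be the unique viscosity solution of $(-\De)_K^s u = (f^+)^p + g(x,f^+)$ in $\Om$ with $u\equiv 0$ outside; well-posedness and comparison come from \cite{CS}, while compactness is a consequence of the global $C^\al$ estimates. A positive solution of \eqref{problema} is a nontrivial fixed point of $T$. I would then set up a standard homotopy, for instance $H(t,u)=t\,T(u)+\sigma\,t\,\vf_1$ with $\vf_1$ the first eigenfunction of $(-\De)_K^s$ on $\Om$ and $\sigma>0$ large, combined with the uniform a priori bound (applied to the whole family, for which the same blow-up argument works) to show that all possible fixed points lie inside some ball $B_R$ and that no solutions of $H(t,\cdot)=\mathrm{id}$ exist for $t$ on the boundary of a suitable interval. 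Computing the Leray--Schauder degree at $t=0$ and $t=1$ and invoking homotopy invariance produces a nonzero fixed point, which must be positive by the strong maximum principle for $(-\De)_K^s$.

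The main obstacle is the blow-up analysis at boundary points: when $x_n\to x_\infty\in\p\Om$ and the rescaled distance stays bounded, the limit problem is posed on a half-space, and one must combine uniform boundary regularity (flattening the $C^2$ boundary and using the barriers from \cite{CS}) with the half-space Liouville theorem for the fractional Laplacian in the subcritical range. A secondary technical point is stability of viscosity solutions under the kernel variation $K_n\to 1$, where condition \eqref{continuidad} is exactly what is needed to identify the limit operator. The degree-theoretic part, by contrast, is essentially routine once the a priori bound is in place.
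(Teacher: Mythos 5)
Your overall strategy---Gidas--Spruck blow-up plus Liouville theorems for the a priori bound, then a degree argument in the cone of nonnegative functions---is exactly the route the paper takes. But two steps that you treat as routine are precisely where the paper has to work, and as written they are gaps.

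First, in the boundary blow-up case you assert that the limit $v$ on the half-space is nontrivial, deferring to ``uniform boundary regularity \ldots using the barriers from \cite{CS}.'' The issue is quantitative: if $M_n^{(p-1)/(2s)}\,\mathrm{dist}(x_n,\p\Om)\to 0$, the normalization point where $v_n=1$ collapses onto $\p\R^N_+$ and the limit could be trivial, so no contradiction with the half-space Liouville theorem would follow. To exclude this one needs an upper barrier of the form $d_\mu^{\,2s-\theta}$ with $\theta\in(s,2s)$ for the \emph{rescaled} domains $\Om^\mu$, with constants independent of the dilation parameter $\mu$; this is Lemma \ref{lema-barrera-2} of the paper, its main technical ingredient, and its proof uses hypothesis \eqref{continuidad} essentially (the rescaled kernel $K(\mu_n d_n z)$ must converge to $1$ so that the limiting one-dimensional integral can be identified via the exact solution $(x_+)^s$ of the fractional Laplacian on a half-line). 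So \eqref{continuidad} is not only needed to identify the limit operator, as you state, but already to run the barrier argument; generic barriers for a fixed domain do not suffice because the constants degenerate as the domain expands.

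Second, your homotopy uses ``$\vf_1$ the first eigenfunction of $(-\De)_K^s$.'' For a merely measurable kernel $K$ satisfying \eqref{elipticidad}--\eqref{continuidad} the existence of such an eigenfunction is not available (the paper explicitly remarks that no results on the eigenvalues of $(-\De)^s_K$ are known). The paper circumvents this by using the Berestycki--Nirenberg--Varadhan--type quantity $\la_1$ defined in \eqref{eigenvalue}, proving only that $\la_1<+\infty$ (Lemma \ref{lema-auto}, via a comparison between two auxiliary solutions on a ball), and taking as cone direction the solution $\psi$ of $(-\De)_K^s\psi=1$. The finiteness of $\la_1$ is then exactly what rules out solutions of $u-T(u)=t\psi$ for large $t$ (since $u^p+t\ge \mu u-C+t$ with $\mu>\la_1$), a step you leave unjustified. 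With these two repairs---the scaling-uniform barrier and the replacement of $\vf_1$ by $\psi$ together with Lemma \ref{lema-auto}---your argument matches the paper's proof.
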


It is to be noted that the scaling method requires on one side of good estimates for solutions,
both interior and at the boundary, and on the other side of a Liouville theorem in $\R^N$.
In the present case interior estimates are well known (cf. \cite{CS}), but good local estimates
near the boundary do not seem to be available. We overcome this problem by constructing
suitable barriers which can be controlled when the scaled domains are moving. It is worthy of
mention at this point that the corresponding Liouville theorems are already available (cf.
\cite{ZCCY,CLO1,QX,FW}).

Let us also mention that we were not aware of any work dealing with the question of a priori
bounds for problem \eqref{problema}; however, when we were completing this manuscript, it has
just come to our attention the very recent preprint \cite{CLC}, where a priori bounds for
smooth solutions are obtained in problem \eqref{problema} with $K=1$ and $g=0$ (but no
existence is shown).
On the other hand, it is important to mention the papers \cite{BCPS,CT,CZ,C},
where a priori bounds and Liouville results have been obtained for related operators, like
the ``spectral" fractional laplacian. To see some diferences between this operator and $(-\Delta)^s$, 
obtained by setting $K=1$ in the present work, see for instance \cite{SV}.
In all the previous works dealing with the spectral fractional Laplacian, the main tool is the well-known
Caffarelli-Silvestre extension obtained in \cite{CS3}. This tool is not available for us here, 
hence we will treat the problem in a nonlocal way with a direct approach.

\medskip

As we commented before, we will also be concerned with the adaptation of the previous result to some more general
equations. More precisely, we will study the perturbation of equation
\eqref{problema} with the introduction of gradient terms, that is,
\begin{equation}\label{problema-grad}
\left\{
\begin{array}{ll}
(-\De)_K^s u = u^p + h(x,u,\nabla u) & \hbox{in }\Om,\\[0.35pc]
\ \ u=0 & \hbox{in }\R^N \setminus \Om.
\end{array}
\right.
\end{equation}
For the type of nonlocal equations that we are analyzing, a natural restriction in order
that the gradient is meaningful is $s>\frac{1}{2}$. However, there seem to be few works
dealing with nonlocal equations with gradient terms (see for example 
\cite{AI,BCI,BK2,CaV,CL,CV,GJL,S2,SVZ,W}). 

It is to be noted that, at least in the case $K=1$, since solutions $u$
are expected to behave
like ${\rm dist}(x,\p\Om)^s$ near the boundary by Hopf's principle
(cf. \cite{ROS}), then the
gradient is expected to be singular near $\p\Om$. This implies that the standard scaling
method has to be modified to take care of this singularity. We achieve this by introducing
some suitable weighted norms which have been already used in the context of second order
elliptic equations (cf. \cite{GT}).

However, the introduction of this weighted norms presents some problems since the
scaling needed near the boundary is not the same one as in the interior. Therefore we
need to split our study into two parts: first, we obtain ``rough" universal bounds for all
solutions of \eqref{problema-grad}, by using the well-known doubling lemma in \cite{PQS}.
Since our problems are nonlocal in nature this forces us to strengthen the 
subcriticality hypothesis \eqref{subcritico} and to require instead
\begin{equation}\label{subserrin}
1<p< \frac{N}{N-2s}
\end{equation}
(cf. Remarks \ref{comentario} (b) in Section 3).
After that, we reduce the obtention of the a priori bounds to an analysis near the boundary.
With a suitable scaling, the lack of a priori bounds leads to a problem in a half-space which
has no solutions according to the results in \cite{QX} or \cite{FW}.

It is worth stressing that the main results in this paper rely in the construction 
of suitable barriers for equations with a singular right-hand side, which are well-behaved with respect to
suitable perturbations of the domain (cf. Section \ref{s2}).

\medskip

Le us finally state our result for problem \eqref{problema-grad}. In this context, a solution of
\eqref{problema-grad} is a function $u\in C^1(\Om)\cap C(\R^N)$ vanishing outside $\Om$ and 
verifying the equation in the viscosity sense.

\begin{teorema}\label{th-grad}
Assume $\Om$ is a $C^2$ bounded domain of $\R^N$, $N\ge 2$, $s \in (\frac{1}{2},1)$ and $p$ verifies
\eqref{subserrin}. Let $K$ be a measurable kernel that satisfies \eqref{elipticidad} and \eqref{continuidad}. If
$h\in C(\Om \times \R\times \R^N)$ is nonnegative and verifies
$$
h(x,z,\xi) \le C (|z|^r + |\xi|^t), \quad x\in\Om,\ z\in \R,\ \xi\in \R^N,
$$
where $1<r<p$ and $1<t<\frac{2sp}{p+2s-1}$, then problem \eqref{problema-grad} admits at
least a positive solution.
\end{teorema}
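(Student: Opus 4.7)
The plan is to carry out the same topological degree strategy as in Theorem~\ref{th-1}, the new difficulty being that a~priori bounds have to be obtained in a \emph{weighted} norm which accommodates the expected boundary singularity of $\nabla u$. Since by Hopf's principle positive solutions of \eqref{problema-grad} behave like $d(x,\p\Om)^s$, their gradient is singular like $d(x,\p\Om)^{s-1}$, so the natural ambient space is
\[
\mathcal{X}=\{u\in C(\R^N)\cap C^1(\Om):\ u=0\text{ in }\R^N\setminus\Om,\ \|u\|_\infty+\sup_{x\in\Om}d(x,\p\Om)^{1-s}|\nabla u(x)|<\infty\},
\]
in the spirit of the weighted norms used in \cite{GT}. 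Once bounds in $\mathcal{X}$ are in hand, existence follows from Leray--Schauder degree applied to the fixed point operator $v\mapsto u$ defined by $(-\De)^s_K u=v^p+h(x,v,\nabla v)$ with zero exterior data, whose compactness on $\mathcal{X}$ is a consequence of the regularity theory of \cite{CS,CS2} combined with the boundary barriers of Section~\ref{s2}.

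I would derive the a~priori bounds in two stages. \emph{Stage~1:} a rough $L^\infty$ bound on $u$ alone, via the doubling lemma of \cite{PQS}. Assuming $\|u_n\|_\infty\to\infty$, one locates $x_n\in\Om$ and scales $\lambda_n\to 0$ so that $v_n(y):=\lambda_n^{2s/(p-1)}u_n(x_n+\lambda_n y)$ is locally uniformly bounded with $v_n(0)=1$. A direct computation shows that \eqref{problema-grad} rescales to $(-\De)^s_{K_n}v_n=v_n^p+(\text{lower order})$, where the coefficients in front of the $|z|^r$ and $|\xi|^t$ contributions to $h$ scale respectively like $\lambda_n^{2s(p-r)/(p-1)}$ and $\lambda_n^{(2sp-t(p+2s-1))/(p-1)}$, both tending to zero thanks to $r<p$ and $t<\frac{2sp}{p+2s-1}$. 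By \eqref{continuidad}, $K_n\to 1$ in a suitable sense, while the blown-up sets converge to $\R^N$ or to a half-space. The limit is therefore a bounded positive solution of $(-\De)^sv=v^p$ on $\R^N$ or on a half-space, which is ruled out by the Liouville theorems of \cite{ZCCY,CLO1,QX,FW} under the subcriticality \eqref{subserrin}; the stronger Serrin-type exponent is needed precisely because both the full-space and the half-space alternatives must be handled simultaneously.

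\emph{Stage~2:} upgrade this to a bound in $\mathcal{X}$. Once $\|u_n\|_\infty\le C$, interior $C^{1,\alpha}$ regularity from \cite{CS,CS2} bounds $|\nabla u_n|$ on compact subsets of $\Om$, so divergence of $d(\cdot,\p\Om)^{1-s}|\nabla u_n|$ must take place along a sequence $x_n\to x_0\in\p\Om$ with $d_n:=d(x_n,\p\Om)\to 0$. I would then use the boundary-adapted zoom $\tilde v_n(y)=d_n^{-s}u_n(x_n+d_n y)$, for which $|\nabla\tilde v_n|$ equals the weighted gradient at the scaled point, combined with the customary flattening of $\p\Om$ near $x_0$; after a further normalisation, the rescaled equation lives on a domain converging to a half-space, with coefficients carrying positive powers of $d_n$ that vanish in the limit by virtue of \eqref{subserrin} and $t<\frac{2sp}{p+2s-1}$. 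The barriers of Section~\ref{s2}, built to be stable under domain dilations with a possibly singular right-hand side, control the nonlocal tails and justify the passage to the limit, producing a positive bounded solution of $(-\De)^sv=v^p$ on a half-space with Dirichlet exterior data, which is excluded by \cite{QX,FW}.

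With both bounds established, existence follows by a Leray--Schauder degree computation on a large ball of $\mathcal{X}$: a first homotopy deforms the problem into the gradient-free problem already covered by Theorem~\ref{th-1}, while a second homotopy, obtained by adding a large constant source term, forces nonexistence (via the same blow-up scheme) and produces a zero degree on a complementary set, yielding a nontrivial fixed point in the positive cone. The principal obstacle throughout is the boundary blow-up of Stage~2: one must simultaneously contract the domain, rescale a singular gradient and retain enough control of the nonlocal tails to identify the limit as the half-space Dirichlet problem, and this is exactly what the tailored barriers of Section~\ref{s2} are designed to achieve.
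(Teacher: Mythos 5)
Your overall architecture (weighted norms, a two-stage a priori bound with the doubling lemma first and a boundary blow-up second, barriers to control the boundary, then degree theory) is the same as the paper's, but there are three genuine gaps.

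First, your explanation of why the Serrin exponent \eqref{subserrin} is needed is wrong, and it hides the real difficulty. The half-space Liouville theorem of \cite{QX,FW} holds up to $\frac{N-1+2s}{N-1-2s}$, which is \emph{larger} than $\frac{N+2s}{N-2s}$, so ``handling both the full-space and half-space alternatives simultaneously'' would only require the ordinary subcriticality \eqref{subcritico}. The actual problem is nonlocality in the doubling-lemma step: the rescaled functions $v_k$ are controlled only on the ball $B_k=B(0,k)$ supplied by the doubling lemma, and nothing is known about them outside. Since $(-\De)^s$ sees all of $\R^N$, you cannot pass to the limit in the \emph{equation}; you can only extend $v_k$ by zero outside $B_k$ and pass to the limit in the resulting \emph{inequality}, obtaining a supersolution $(-\De)^s v\ge v^p$ in $\R^N$. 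The Liouville theorem for supersolutions (Felmer--Quaas) holds precisely for $p<\frac{N}{N-2s}$, and this is where \eqref{subserrin} enters. Your Stage~1 as written (producing a bounded \emph{solution} on $\R^N$ or a half-space) does not go through. Relatedly, Stage~1 cannot yield an $L^\infty$ bound: the doubling lemma gives only the interior decay $u\le C\,d(x,\p\Om)^{-2s/(p-1)}$, and its role in the paper is to show that the points where the weighted norm blows up must approach $\p\Om$ at a controlled rate.

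Second, your choice of weight and of the boundary scaling do not work as stated. With $\sigma=s$ the right-hand side $|\nabla u|^t$ is of order $d^{(s-1)t}$ with $(1-s)t<s$ for every $s>\frac12$, which falls outside the window $\theta\in(s,2s)$ in which the barriers of Section~\ref{s2} give the bound $u\le Cd^{2s-\theta}$, and, more importantly, the contradiction at the blow-up point requires $2s-(1-\sigma)t>\sigma$; the paper's condition $\sigma<1-\frac{s}{t}$ is chosen exactly so that $(1-\sigma)t\in(s,2s)$ and this inequality holds. Also, the zoom $\tilde v_n(y)=d_n^{-s}u_n(x_n+d_ny)$ does not preserve the equation: with spatial scale $\lambda$ and amplitude $A$ one needs $A^{1-p}\lambda^{2s}\to1$, so the normalization must be tied to the divergence rate of the weighted norm (in the paper, $\mu_k=M_k(x_k)^{-\frac{p-1}{2s+\sigma(p-1)}}$), otherwise the limit equation degenerates to $(-\De)^sv=0$. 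Third, the existence step is underspecified: a homotopy to the gradient-free problem does not by itself produce a nonzero degree, and nonexistence for the translated problem $u-T(u)=t\psi$ with $t$ large cannot be extracted from the blow-up scheme alone; the paper uses a cone fixed-point theorem together with the finiteness of the principal eigenvalue $\la_1$ of $(-\De)^s_K$ to rule out solutions for large $t$.
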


\medskip

The rest of the paper is organized as follows: in Section 2 we recall some interior regularity
results needed for our arguments, and we solve some linear problems by constructing
suitable barriers. Section 3 is dedicated to the obtention of a priori bounds, while in Secion 4
we show the existence of solutions that is, we give the proofs of Theorems \ref{th-1} and  
\ref{th-grad}.

\medskip

\section{Interior regularity and some barriers}\label{s2}
\setcounter{section}{2}
\setcounter{equation}{0}

The aim of this section is to collect several results regarding the construction of
suitable barriers and also some interior regularity for equations related to \eqref{problema}
and \eqref{problema-grad}. We will use throughout the standard convention that the letter $C$ denotes 
a positive constant, probably different from line to line.

\medskip

Consider $s\in (0,1)$, a measurable kernel $K$ verifying \eqref{elipticidad}
and \eqref{continuidad} and a $C^2$ bounded domain $\Om$. We begin by analyzing the linear equation
\begin{equation}\label{eq-regularidad}
(-\De)^s_K u = f \quad \hbox{in } \Om,
\end{equation}
where $f\in L^\infty_{\rm loc}(\Om)$. As a consequence of Theorem 12.1 in
\cite{CS} we get that if $u \in C(\Om)\cap L^\infty( \R^N)$ is a viscosity solution of \eqref{eq-regularidad} then
$u\in C^\al_{\rm loc}(\Om)$ for some $\al \in (0,1)$. Moreover, for every ball $B_R\subset \subset \Om$
there exists a positive constant $C=C(N,s,\la,\La,R)$ such that:
\begin{equation}\label{est-ca}
\| u\|_{C^{\al}(\overline{B_{R/2}})} \le C\| f\| _{L^\infty(B_R)} + \| u\|_{L^\infty(\R^N)}.
\end{equation}
The precise dependence of the constant $C$ on $R$ can be determined by means of a simple scaling,
as in Lemma \ref{lema-regularidad} below; however, for interior estimates this will be of no
importance to us.
When $s>\frac{1}{2}$, the H\"older estimate for the solution can be improved to obtain an
estimate for the first derivatives. In fact, as a consequence of Theorem 1.2 in \cite{K}, we have that
$u\in C^{1,\be}_{\rm loc}(\Om)$, for some $\beta=\beta(N,s,\la,\La)  \in (0,1)$.  Also, for every
ball $B_R\subset \subset \Om$ there exists a positive constant $C=C(N,s,\la,\La,R)$ such that:
\begin{equation}\label{est-c1a}
\| u\|_{C^{1,\beta}(\overline{B_{R/2}})} \le C
\left( \| f\| _{L^\infty(B_R)} + \| u\|_{L^\infty(\R^N)}\right).
\end{equation}
Both estimates will play a prominent role in our proof of a priori bounds for positive solutions of
\eqref{problema} and \eqref{problema-grad}.

\medskip

Next we need to deal with problems with a right hand side which is possibly singular at
$\p \Om$. For this aim, it is convenient to introduce some norms which will help us to quantify
the singularity of both the right hand sides and the gradient of the solutions in case
$s>\frac{1}{2}$.

Let us denote, for $x\in \Om$, $d(x)={\rm dist}(x,\p \Om)$. It is well known that $d$ is Lipschitz
continuous in $\Om$ with Lipschitz constant 1 and it is a $C^2$ function in a neighborhood of $\p\Om$.
We modify it outside this neighborhood to make it a $C^2$ function (still with Lipschitz constant
1), and we extend it to be zero outside $\Om$.

Now, for $\theta \in \R$ and $u \in C(\Om)$, let us denote (cf. Chapter 6 in \cite{GT}):
$$
\| u\|_0^{(\theta)} =\sup_\Om\; d(x)^{\theta} |u(x)|.
$$
When $u\in C^1(\Om)$ we also set
\begin{equation}\label{norma-c1}
\| u\|_1^{(\theta)} = \sup_\Om\; \left(d(x)^{\theta} |u(x)|+ d(x)^{\theta+1} |\nabla u(x)|\right).
\end{equation}
Then we have the following existence result for the Dirichlet problem associated to \eqref{eq-regularidad}.

\begin{lema}\label{lema-existencia}
Assume $\Om$ is a $C^2$ bounded domain, $0<s<1$ and $K$ is a measurable function
verifying \eqref{elipticidad} and \eqref{continuidad}. Let $f\in C(\Om)$ be such that
$\| f \|_0^{(\theta)}<+\infty$ for some $\theta \in (s,2s)$. Then the problem
\begin{equation}\label{prob-lineal}
\left\{
\begin{array}{ll}
(-\De)_K^s u=f  & \hbox{in } \Om,\\[0.35pc]
\; \; u=0 & \hbox{in } \R^N\setminus \Om,
\end{array}
\right.
\end{equation}
admits a unique viscosity solution. Moreover, there exists a positive constant $C$
such that
\begin{equation}\label{est-1}
\| u \|_0^{(\theta-2s)} \le C \| f \| _0^{(\theta)}.
\end{equation}
Finally, if $f\ge 0$ in $\Om$ then $u\ge 0$ in $\Om$.
\end{lema}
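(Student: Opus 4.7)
The proof strategy I would follow is to solve \eqref{prob-lineal} by approximation, guided by a barrier that captures the boundary behavior $d(x)^{2s-\theta}$. Concretely, the core construction is a function $V\in C(\R^N)$, vanishing outside $\Om$, positive and comparable with $d(x)^{2s-\theta}$ in a neighborhood of $\p\Om$, and satisfying
\begin{equation*}
(-\De)_K^s V\ge d(x)^{-\theta}\qquad\text{in }\Om
\end{equation*}
in the viscosity sense. Once $V$ is available, the estimate \eqref{est-1} and the existence of $u$ are both obtained by comparison with $\|f\|_0^{(\theta)}V$ and passage to the limit in an approximation.

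For the construction of $V$, I would start from the half-space model. Since $2s-\theta\in(0,s)$, a standard computation yields
\begin{equation*}
(-\De)^s (x_N)_+^{2s-\theta}=c(s,\theta)\,x_N^{-\theta}\qquad\text{in }\{x_N>0\},
\end{equation*}
with $c(s,\theta)>0$. Using the $C^2$ regularity of $\p\Om$, I would transplant this model to a tubular neighborhood of $\p\Om$ via local straightenings of the boundary, obtaining a function with the desired singular profile. The hypotheses \eqref{elipticidad} and \eqref{continuidad} on $K$ are what allow one to pass from $(-\De)^s$ to $(-\De)_K^s$: the near-zero part of the defining integral is close to the one with $K\equiv 1$ thanks to \eqref{continuidad}, while the tail is controlled by $\La$ and produces only errors of order $d(x)^{2s-\theta}$, which are dominated by the leading singularity $d(x)^{-\theta}$ near $\p\Om$. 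A smooth cut-off then glues this local object to a bounded function in the interior of $\Om$, where the supersolution property is trivial since $d$ is bounded below. This barrier construction is the main obstacle.

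With $V$ in hand, I would truncate the datum: set $f_n=\max(-n,\min(n,f))$, so that $|f_n|\le|f|$, $f_n\in C(\Om)\cap L^\infty(\Om)$ and $f_n\to f$ locally uniformly in $\Om$. Perron's method for nonlocal equations (see \cite{CS}) provides a unique bounded viscosity solution $u_n$ of \eqref{prob-lineal} with right-hand side $f_n$. Since $\|f\|_0^{(\theta)}V$ is a supersolution and $-\|f\|_0^{(\theta)}V$ a subsolution for this problem, comparison yields
\begin{equation*}
|u_n(x)|\le\|f\|_0^{(\theta)}\,V(x)\le C\|f\|_0^{(\theta)}\,d(x)^{2s-\theta}\qquad\text{in }\Om,
\end{equation*}
uniformly in $n$. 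Interior H\"older regularity \eqref{est-ca}, applied on compact subsets of $\Om$, yields equicontinuity on compacts. Arzel\`a--Ascoli together with a diagonal extraction produce a subsequence $u_{n_k}$ converging locally uniformly in $\Om$; the uniform decay $|u_n|\le CV$ near $\p\Om$ combined with $u_n\equiv 0$ outside $\Om$ upgrades this to uniform convergence on $\R^N$ to some $u\in C(\R^N)$ vanishing outside $\Om$. Stability of viscosity solutions identifies $u$ as a viscosity solution of \eqref{prob-lineal}, and letting $k\to\infty$ in the pointwise bound above produces \eqref{est-1}.

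Finally, uniqueness is an immediate consequence of the comparison principle for $(-\De)_K^s$: if $u_1$ and $u_2$ were two solutions, then $u_1-u_2$ would be an $s$-harmonic viscosity solution vanishing outside $\Om$, hence identically zero. If moreover $f\ge 0$, we may choose the truncation so that $f_n\ge 0$, whence $u_n\ge 0$ by comparison with the zero function, and therefore $u\ge 0$ in the limit.
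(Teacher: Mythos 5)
Your overall architecture coincides with the paper's: a barrier comparable to $d^{2s-\theta}$ with $(-\De)^s_K(\cdot)\gtrsim d^{-\theta}$, approximation of $f$, comparison against $\|f\|_0^{(\theta)}$ times the barrier, interior H\"older estimates plus Arzel\`a--Ascoli and stability of viscosity solutions, and the comparison principle for uniqueness and sign. The genuine gap is in the step you yourself flag as the main obstacle: the construction of the barrier. For a nonlocal operator you cannot ``transplant'' the half-space profile $(x_N)_+^{2s-\theta}$ by locally straightening $\p\Om$ -- a diffeomorphism does not conjugate $(-\De)^s_K$ into an operator of the same form, and the transplanted function is not $d^{2s-\theta}$. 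If instead you work directly with $d^{2s-\theta}$ (as the paper does in Lemma \ref{lema-barrera-1}), the error bookkeeping you propose is not the right one: rescaling the defining integral by $d(x)$ at a point $x$ near $\p\Om$ shows that \emph{every} contribution -- the near-field where Taylor expansion of $d$ applies, the far field, and the curvature corrections -- is of order $d(x)^{-\theta}$ times a constant, not of the lower order $d(x)^{2s-\theta}$; the whole point is to show that the limiting constant is strictly positive. The paper does this by contradiction and compactness, reducing to the one-dimensional integral $F(2s-\theta)=\int_\R\bigl(2-(1+t)_+^{2s-\theta}-(1-t)_+^{2s-\theta}\bigr)|t|^{-1-2s}\,dt$ and proving $F(\tau)>0$ for $\tau\in(0,s)$ from $F(0)=F(s)=0$ and strict concavity of $F$ -- this is where the hypothesis $\theta\in(s,2s)$ and the normalization \eqref{continuidad} actually enter. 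Your positivity claim $c(s,\theta)>0$ is the same fact, but the passage from the model to $\Om$ is asserted rather than proved, and that passage is the content of the lemma.

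Two smaller points. First, the barrier inequality only holds in a collar $\Om_\de$; in the interior your cut-off object need not be a supersolution at all (``trivial since $d$ is bounded below'' is false: $(-\De)^s_K$ of a truncated boundary profile can be negative inside $\Om$). The paper repairs this by adding $t\,w$, where $w$ solves $(-\De)^s_K w=1$ in $\Om$ with $w=0$ outside (Theorem 3.1 in \cite{FQ}), and taking $t$ large. Second, your value-truncation $f_n=\max(-n,\min(n,f))$ is bounded but need not extend continuously to $\Omb$ (think of $f=d^{-\theta}\sin(1/d)$), whereas the existence theorem you invoke for the approximate problems requires data in $C(\Omb)$; the paper instead uses a spatial cut-off $f_n=f\psi_n$ with $\psi_n$ vanishing in $\Om_{1/n}$, which sidesteps this. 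Both points are repairable, but the barrier construction needs to be carried out, not sketched.
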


\bigskip

The proof of this result relies in the construction of a suitable barrier in a neighborhood of
the boundary of $\Om$ which we will undertake in the following lemma. This barrier
will also turn out to be important to obtain
bounds for the solutions when trying to apply the scaling method. It is worthy of mention
that for quite general operators, the lemma below can be obtained provided that
$\theta$ is taken close enough to $2s$ (cf. for instance Lemma 3.2 in \cite{FQ}). But
the precise assumptions we are imposing on $K$, especifically \eqref{continuidad},
allow us to construct the barrier in
the whole range $\theta \in (s,2s)$.

In what follows, we denote, for small positive $\de$,
$$
\Om_\de=\{x\in \Om: \hbox{dist}(x,\p \Om)<\de\}, 
$$
and $K_\mu(x)= K(\mu x)$ for $\mu>0$.

\begin{lema}\label{lema-barrera-1}
Let $\Om$ be a $C^2$ bounded domain of $\R^N$, $0<s<1$ and $K$ be measurable and
verify \eqref{elipticidad} and \eqref{continuidad}.
For every $\theta \in (s,2s)$ and $\mu_0>0$, there exist
$C_0,\de>0$ such that
$$
(-\De)^s_{K_\mu} d^{2s-\theta} \ge C_0 d^{-\theta} \quad \hbox{in } \Om_\de,
$$
if $0<\mu \le \mu_0$.
\end{lema}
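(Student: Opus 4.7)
The plan is a scaling reduction to a half-space computation. Fix $x_0 \in \Om_\delta$, set $\rho := d(x_0)$ and $\tilde d(z) := d(x_0 + \rho z)/\rho$. Substituting $y = \rho z$ in the defining integral gives
\begin{equation*}
(-\De)^s_{K_\mu} d^{2s-\theta}(x_0) = \rho^{-\theta}\, J_{x_0,\rho,\mu}, \qquad J_{x_0,\rho,\mu} := \int_{\R^N} \frac{2 - \tilde d(z)^{\alpha} - \tilde d(-z)^{\alpha}}{|z|^{N+2s}}\, K(\mu \rho z)\, dz,
\end{equation*}
with $\alpha := 2s - \theta$, so it suffices to bound $J_{x_0,\rho,\mu}$ below by a positive constant uniformly in $x_0 \in \Om_\delta$ and $\mu \in (0, \mu_0]$ for $\delta$ small.

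The guiding principle is that as $\rho \to 0$ the rescaled distance $\tilde d$ converges locally to the affine profile $(1 + e_{x_0} \cdot z)_+$, where $e_{x_0} := \nabla d(x_0)$ is the inward unit normal at the boundary projection of $x_0$; meanwhile $K(\mu \rho z) \to 1$ pointwise by \eqref{continuidad}. Hence $J_{x_0,\rho,\mu}$ should converge to the rotation-invariant constant
\begin{equation*}
c(\alpha, s) := \int_{\R^N} \frac{2 - (1 + e \cdot z)_+^{\alpha} - (1 - e \cdot z)_+^{\alpha}}{|z|^{N+2s}}\, dz = (-\De)^s\bigl[(1 + e \cdot y)_+^{\alpha}\bigr](0),
\end{equation*}
which is the classical half-space constant. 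Its positivity in the regime $\alpha \in (0,s)$ is exactly what powers the lemma and explains the restriction $\theta \in (s, 2s)$: $(y_N)_+^{s}$ is $s$-harmonic in the half-space, while $(y_N)_+^{\alpha}$ becomes pointwise $s$-subharmonic for $\alpha < s$.

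The remaining work is a dominated-convergence argument with uniformity in the parameters. The $C^2$-regularity of $d$ in $\Om_\delta$ (for $\delta$ small) together with a Taylor expansion of $t \mapsto t^{\alpha}$ around $t = 1$ gives $|2 - \tilde d(z)^{\alpha} - \tilde d(-z)^{\alpha}| \le C|z|^2$ near the origin, yielding an integrand of order $|z|^{2-N-2s}$ (integrable since $s < 1$); for large $|z|$ the Lipschitz-$1$ bound $\tilde d(z) \le 1 + |z|$ combined with $\alpha < 2s$ dominates the integrand by $C(1 + |z|^{\alpha})|z|^{-N-2s}$ (integrable at infinity). Since $K \le \La$ by \eqref{elipticidad}, this produces a single dominating function valid uniformly in $x_0, \rho, \mu$. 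The main obstacle is making the convergence $J_{x_0,\rho,\mu} \to c(\alpha, s)$ \emph{uniform} in $x_0$ and $\mu \le \mu_0$: I argue by compactness, namely any sequence $x_n \in \Om_{\delta_n}$ with $\delta_n \to 0$ admits a subsequence along which $e_{x_n} \to e$ and the pointwise limit of the integrand is the one associated with $e$, and dominated convergence then yields the same limit $c(\alpha,s)$ (independent of $e$). Choosing $\delta$ small enough that $J_{x_0,\rho,\mu} \ge c(\alpha,s)/2$ throughout the admissible range concludes the proof with $C_0 := c(\alpha, s)/2$.
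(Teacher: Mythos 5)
Your scaling reduction is essentially the paper's argument in contrapositive form: the paper assumes the conclusion fails along sequences $x_n$, $\mu_n$, rescales by $d(x_n)$, splits the integral into $|z|\le\eta$, $\eta\le|z|\le L$ and $|z|\ge L$, and passes to the limit by dominated convergence to land on exactly your half-space quantity. Your direct compactness formulation is the same computation, and your dominating function (Taylor/$C^2$ bound of order $|z|^{2-N-2s}$ near the origin, Lipschitz bound of order $(1+|z|)^{\alpha}|z|^{-N-2s}$ at infinity, uniform in $x_0$, $\rho$ and $\mu\le\mu_0$ thanks to $K\le\Lambda$ and \eqref{continuidad}) matches the paper's estimates.

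The genuine gap is the positivity of the limiting constant $c(\alpha,s)$ for $\alpha=2s-\theta\in(0,s)$, which you assert but do not prove. Your justification --- ``$(y_N)_+^{s}$ is $s$-harmonic, while $(y_N)_+^{\alpha}$ becomes pointwise $s$-subharmonic for $\alpha<s$'' --- is circular: the statement that $(y_N)_+^{\alpha}$ has a definite sign under $(-\De)^s$ \emph{is} the claim $c(\alpha,s)>0$, so it cannot be used to establish it (and note the sign: the lemma needs $(-\De)^s(y_N)_+^{\alpha}>0$, i.e.\ superharmonicity in this paper's convention, not subharmonicity). This positivity is the computational heart of the lemma and is precisely where the hypothesis $\theta\in(s,2s)$ enters. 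The paper proves it by reducing, via Fubini and rotation invariance, the $N$-dimensional integral to the one-dimensional function
$$
F(\tau)=\int_\R\frac{2-(1+t)_+^{\tau}-(1-t)_+^{\tau}}{|t|^{1+2s}}\,dt,
$$
showing that $F$ is smooth and strictly concave on $(0,2s)$, and combining $F(0)=0$ with $F(s)=0$ (the $s$-harmonicity of $(x_+)^s$) to conclude $F(\tau)>0$ for all $\tau\in(0,s)$. You need to supply this step, or an explicit evaluation of $c(\alpha,s)$ together with its sign, for the proof to close.
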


\begin{proof}
By contradiction, let us assume that the conclusion of the lemma is not true. Then there exist $\theta\in
(s,2s)$, $\mu_0>0$,  sequences of points $x_n\in \Om$ with
$d(x_n)\to 0$ and numbers
$\mu_n\in (0,\mu_0]$ such that
\begin{equation}\label{contradiction}
\lim_{n\to +\infty} d(x_n)^{\theta} (-\De)^s_{K_{\mu_n}} d^{2s-\theta} (x_n) \le 0.
\end{equation}
Denoting for simplicity $d_n:=d(x_n)$, and performing the change of variables $y= d_n z$
in the integral appearing in \eqref{contradiction} we obtain
\begin{equation}\label{contradiction-2}
\int_{\R^N} \frac{2 - \left(\frac{d(x_n+d_n z)}{d_n}\right)^{2s-\theta}-
\left(\frac{d(x_n-d_n z)}{d_n}\right)^{2s-\theta}}{|z|^{N+2s}} K(\mu_n d_n z) dz \le o(1).
\end{equation}
Before passing to the limit in this integral, let us estimate it from below. Observe that when
$x_n+d_n  z\in \Om$, we have by the Lipschitz property of $d$ that $d(x_n+d_n z) \le d_n (1+|z|)$.
Of course, the same is true when $x_n +d_n z \not\in \Om$ and it similarly follows that $d(x_n-d_n z)
\le d_n (1+|z|)$. Thus, taking $L>0$ we obtain for large $n$
\begin{equation}\label{ineq1}
\begin{array}{l}
\ds \int_{|z| \ge L}
\frac{2 - \left(\frac{d(x_n+d_n z)}{d_n}\right)^{2s-\theta}- \left(\frac{d(x_n-d_n z)}{d_n}\right)^{2s-\theta}}
{|z|^{N+2s}} K(\mu_n d_n z) dz\\[1.4pc]
\quad \ge \ds - 2 \Lambda \int_{ |z| \ge L} \frac{(1+|z|)^{2s-\theta}}{|z|^{N+2s}} dz.
\end{array}
\end{equation}
On the other hand, since $d$ is smooth in a neighborhood of the boundary, when $|z|\le L$
and $x_n +d_n z\in \Om$, we obtain by Taylor's theorem
\begin{equation}\label{taylor}
d(x_n + d_n z )= d_n + d_n \nabla d(x_n) z + \Theta_n(d_n,z) d_n^2 |z|^2,
\end{equation}
where $\Theta_n$ is uniformly bounded. Hence
\begin{equation}\label{eq3}
d(x_n+d_n z ) \le d_n + d_n \nabla d(x_n) z + C d_n^2 |z|^2.
\end{equation}
Now choose $\eta \in (0,1)$ small enough. Since $d(x_n)\to 0$ and $|\nabla d|=1$ in a
neighborhood of the boundary, we can assume that
\begin{equation}\label{extra1}
\nabla d(x_n)\to e \hbox{ as }n\to +\infty \hbox{ for some unit vector }e.
\end{equation}
Without loss of generality, we may take
$e=e_N$, the last vector of the canonical basis of $\R^N$. If we restrict $z$ further to satisfy $|z|\le \eta$,
we obtain $1+\nabla d(x_n) z \sim 1 + z_N \ge 1-\eta>0$ for large $n$, since $|z_N| \le |z|\le \eta$.
Therefore, the right-hand side in \eqref{eq3} is positive for large $n$ (depending only on
$\eta$), so that the inequality \eqref{eq3} is also true when
$x_n+d_n z\not\in \Om$. Moreover, by using again Taylor's theorem
$$
(1+\nabla d(x_n) z +  C d_n |z|^2)^{2s-\theta} \le 1+ (2s-\theta) \nabla d(x_n) z  + C  |z|^2,
$$
for large enough $n$. Thus from \eqref{eq3},
$$
\left(\frac{d(x_n+d_n z)}{d_n}\right)^{2s-\theta} \le 1+ (2s-\theta) \nabla d(x_n) z  + C |z|^2,
$$
for large enough $n$. A similar inequality is obtained for the term involving $d(x_n-d_n z)$. Therefore we deduce
that
\begin{equation}\label{ineq2}
\begin{array}{l}
\ds \int_{ |z| \le \eta }
\frac{2 - \left(\frac{d(x_n+d_n z)}{d_n}\right)^{2s-\theta}- \left(\frac{d(x_n-d_n z)}{d_n}\right)^{2s-\theta}}{|z|^{N+2s}}
K(\mu_n d_n z) dz\\[1.4pc]
\quad \ge \ds - 2 \La C  \int_{ |z| \le \eta} \frac{1}{|z|^{N-2(1-s)}} dz.
\end{array}
\end{equation}
We finally observe that it follows from the above discussion (more precisely from \eqref{taylor} and
\eqref{extra1} with $e=e_N$) that for $\eta \le |z| \le L$
\begin{equation}\label{extra2}
\frac{d(x_n \pm d_n z)}{d_n} \to (1\pm z_N)_+ \qquad \hbox{as } n \to +\infty.
\end{equation}
Therefore using \eqref{ineq1}, \eqref{ineq2} and \eqref{extra2}, and passing to the limit as
$n\to +\infty$ in \eqref{contradiction-2}, by dominated convergence we arrive at
$$
\begin{array}{ll}
\ds - 2 \Lambda \int_{ |z| \ge L} \frac{(1+|z|)^{2s-\theta}}{|z|^{N+2s}} dz +
\int_{ \eta \le |z| \le L}  \frac{2 - (1+z_N)_+^{2s-\theta}- (1-z_N)_+^{2s-\theta}}{|z|^{N+2s}} dz \\[1.4pc]
\ds \qquad \qquad - 2 \La C  \int_{ |z| \le \eta} \frac{1}{|z|^{N-2(1-s)}} dz \le 0.
\end{array}
$$
We have also used that $\lim_{n\to +\infty} K(\mu_n d_n  z)=1$ uniformly, by \eqref{continuidad}
and the boundedness of $\{\mu_n\}$. Letting now $\eta \to 0$ and then $L\to +\infty$, we have
$$
\int_{ \R^N}  \frac{2 - (1+z_N)_+^{2s-\theta}- (1-z_N)_+^{2s-\theta}}{|z|^{N+2s}} dz \le 0.
$$
It is well-known, with the use of Fubini's theorem and a change of variables, that this
integral can be rewritten as a one-dimensional integral
\begin{equation}\label{contra-final}
\int_ \R \frac{2 - (1+t)_+^{2s-\theta}- (1-t)_+^{2s-\theta}}{|t|^{1+2s}} dt \le 0.
\end{equation}
We will see that this is impossible because of our assumption $\theta \in (s,2s)$. Indeed,
consider the function
$$
F(\tau) = \int_ \R \frac{ 2- (1+t)_+^\tau- (1-t)_+^\tau}{|t|^{1+2s}} dt, \quad \tau \in (0,2s),
$$
which is well-defined. We claim that $F \in C^\infty(0,2s)$ and it is strictly concave. In fact, observe that
for $k\in \N$, the candidate for the $k-$th derivative $F^{(k)}(\tau)$ is given by
$$
- \int_ \R \frac{(1+t)_+^\tau (\log(1+t))_+^k + (1-t)_+^\tau (\log(1-t))_+^k}{|t|^{1+2s}} dt.
$$
It is easily seen that this integral converges for every $k\ge 1$, since by Taylor's expansion for $t\sim 0$ we
deduce $(1+t)^\tau (\log(1+t))^k + (1-t)^\tau (\log(1-t))^k =O( t^2)$. Therefore it follows that
$F$ is $C^\infty$ in $(s,2s)$. To see that $F$ is strictly concave, just notice that
$$
F_\e''(\tau)= - \int_\R \frac{ (1+t)_+^\tau (\log (1+t)_+ )^2+ (1-t)_+^\tau (\log (1-t)_+ )^2}{|t|^{1+2s}}
dt < 0.
$$
Finally, it is clear that $F(0)=0$. Moreover, since $v(x)=(x_+)^s$, $x\in \R$ verifies $(-\De)^s v=0$ in $\R_+$
(see for instance the introduction in \cite{CJS} or Proposition 3.1 in \cite{ROS}), we also deduce that $F(s)=0$.
By strict concavity we have $F(\tau)>0$ for $\tau\in (0,s)$, which clearly contradicts \eqref{contra-final} if
$\theta\in (s,2s)$. Therefore \eqref{contra-final} is not true and this concludes the proof of the lemma.
\end{proof}

\bigskip

\begin{proof}[Proof of Lemma \ref{lema-existencia}]
By Lemma \ref{lema-barrera-1} with $\mu_0=1$, there exist $C_0>0$ and $\de>0$
such that
\begin{equation}\label{extra3}
\mbox{$(-\De)^s_K d^{2s-\theta} \ge C_0 d^{-\theta}$ in $\Om_\de$.}
\end{equation}
Let us show
that it is possible to construct a supersolution of the problem
\begin{equation}\label{supersolucion}
\left\{
\begin{array}{ll}
(-\De)_K^s v=C_0 d^{-\theta}  & \hbox{in } \Om,\\[0.35pc]
\; \; v=0 & \hbox{in } \R^N\setminus \Om,
\end{array}
\right.
\end{equation}
vanishing outside $\Om$.

First of all, by Theorem 3.1 in \cite{FQ}, there exists a nonnegative
function $w\in C(\R^N)$ such that $(-\De)_K^s w = 1$ in $\Om$,
with $w=0$ in $\R^N \setminus \Om$.
We claim that $v= d^{2s-\theta} + t w$ is a supersolution of \eqref{supersolucion}
if $t>0$ is large enough. For this aim, observe that $(-\De)^s_K d^{2s-\theta} \ge -C$ in $\Om \setminus
\Om_\de$, since $d$ is a $C^2$ function there. Therefore,
$$
\mbox{$(-\De)^s_K v\ge t-C \ge C_0 d^{-\theta}$
in $\Om\setminus \Om_\de$}
$$
if $t$ is large enough. Since clearly $(-\De)^s_K v \ge C_0 d^{-\theta}$
in $\Om_\de$ as well, we see that $v$ is a supersolution of \eqref{supersolucion}, which vanishes
outside $\Om$.

Now choose a sequence of smooth functions $\{\psi_n\}$ verifying $0\le \psi_n\le 1$, $\psi_n=1$ in
$\Om \setminus \Om_{2/n}$ and $\psi_n=0$ in $\Om_{1/n}$. Define $f_n= f\psi_n$, and consider
the problem
\begin{equation}\label{perturbado}
\left\{
\begin{array}{ll}
(-\De)_K^s u= f_n  & \hbox{in } \Om,\\[0.35pc]
\; \; u=0 & \hbox{in } \R^N\setminus \Om.
\end{array}
\right.
\end{equation}
Since $f_n \in C(\Omb)$, we can use Theorem 3.1 in \cite{FQ} which gives a viscosity solution
$u_n\in C(\R^N)$ of \eqref{perturbado}.

On the other hand, $|f_n| \le |f| \le \| f \|_0^{(\theta)} d^{-\theta}$ in $\Om$, so that the functions
$v_{\pm} = \pm C_0^{-1} \| f \|_0^{(\theta)} v$ are sub and supersolution of \eqref{perturbado}.
By comparison (cf. Theorem 5.2 in \cite{CS}), we obtain
$$
- C_0^{-1} \| f\| _0^{(\theta)} v \le u_n \le C_0^{-1} \| f\| _0^{(\theta)}v  \qquad \hbox{in } \Om.
$$
Now, this bound together with \eqref{est-ca}, Ascoli-Arzel\'a's theorem and a standard diagonal
argument allow us to obtain a subsequence, still denoted by $\{u_n\}$, and a function $u\in C(\Om)$
such that $u_n\to u$ uniformly on compact sets of $\Om$. In addition, $u$ verifies
\begin{equation}\label{eq-ult}
|u |\le C_0^{-1} \| f\| _0^{(\theta)}v \quad \hbox{in }\Om.
\end{equation}
By Corollary 4.7 in \cite{CS}, we can pass to the limit in \eqref{perturbado}
to obtain that $u \in C(\mathbb{R}^{N})$ is a viscosity solution of \eqref{prob-lineal}.
Moreover inequality \eqref{eq-ult} implies that $|u |\le
C \| f\| _0^{(\theta)} d^{2s-\theta}$ in $\Om\setminus\Omega_\de$ for some $C>0$, so that,
by \eqref{prob-lineal}, \eqref{extra3} and the comparison principle, we obtain that
$$
|u| \le C  \| f\|_0^{\theta} d^{2s-\theta} \quad \hbox{in }\Om
$$
which shows \eqref{est-1}.

The uniqueness and the nonnegativity of $u$ when $f \ge 0$ are a consequence
of the maximum principle (again Theorem 5.2 in \cite{CS}). This concludes the proof.
\end{proof}

\bigskip

Our next estimate concerns the gradient of the solutions of \eqref{prob-lineal} when
$s>\frac{1}{2}$. The proof is more or less standard starting from \eqref{est-c1a}
(cf. \cite{GT}) but we include it for completeness

\begin{lema}\label{lema-regularidad}
Assume $\Om$ is a smooth bounded domain and $s>\frac{1}{2}$. There exists a constant $C_0$
which depends on $N,s, \la$ and $\Lambda$  but not on $\Om$ such that, for every $\theta \in (s,2s)$
and $f\in C(\Om)$ with $\| f \|_0^{(\theta)}<+\infty$ the unique solution
$u$ of \eqref{prob-lineal} verifies
\begin{equation}\label{est-adimensional}
\| \nabla u \|_0^{(\theta - 2s +1)} \le C_0 ( \|f \|_0^{(\theta)} + \|u \|_{0}^{(\theta - 2s)}).
\end{equation}
\end{lema}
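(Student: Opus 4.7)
The plan is to reduce the weighted gradient estimate to the interior $C^{1,\beta}$ estimate \eqref{est-c1a} by a rescaling around each point in $\Om$, in the spirit of Chapter 6 of \cite{GT}. I would fix $x_0\in\Om$, set $d_0=d(x_0)$ and $\rho=d_0/2$ so that $B_\rho(x_0)\subset\Om$, and consider
$$
v(y)=\rho^{\theta-2s}\,u(x_0+\rho y),\qquad y\in\R^N.
$$
A direct change of variables (which turns $K$ into $K_\rho$, preserving the ellipticity constants) shows that $v$ is a viscosity solution of $(-\De)^{s}_{K_{\rho}} v = g$ in $B_{1}$, with $g(y)=\rho^{\theta}f(x_0+\rho y)$. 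Since $d(x_0+\rho y)\ge \rho$ on $B_1$, one obtains $\|g\|_{L^{\infty}(B_1)}\le \|f\|_{0}^{(\theta)}$, and because $K_\rho$ lies in the same ellipticity class as $K$, the constant produced by \eqref{est-c1a} will depend only on $N,s,\la,\La$, not on $x_0$ or $\Om$.

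The main obstacle is that \eqref{est-c1a} requires the global $L^\infty(\R^N)$ norm of $v$, whereas the bound $|u(x)|\le \|u\|_0^{(\theta-2s)}d(x)^{2s-\theta}$ from Lemma \ref{lema-existencia}, combined with $d(x_0+\rho y)\le \rho(2+|y|)$, only gives the polynomial growth
$$
|v(y)|\le \|u\|_{0}^{(\theta-2s)}(2+|y|)^{2s-\theta},\qquad 2s-\theta\in(0,s).
$$
I would circumvent this by truncation: pick $\eta\in C_c^\infty(\R^N)$ with $\eta\equiv 1$ on $B_2$ and $\operatorname{supp}\eta\subset B_4$, and set $\bar v=\eta v$, so that $\|\bar v\|_{L^\infty(\R^N)}\le C\|u\|_0^{(\theta-2s)}$. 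For $y\in B_1$, linearity of $(-\De)^s_{K_\rho}$ gives
$$
(-\De)^{s}_{K_{\rho}}\bar v(y)=g(y)-(-\De)^{s}_{K_{\rho}}\bigl((1-\eta)v\bigr)(y),
$$
and the correction term reduces to a tail integral over $|w|\ge 1$ since $(1-\eta)v$ vanishes on $B_2$; on this region $|v(y\pm w)|\le C\|u\|_0^{(\theta-2s)}|w|^{2s-\theta}$, so the correction is bounded by
$$
C\La\|u\|_0^{(\theta-2s)}\int_{|w|\ge 1}|w|^{-N-\theta}\,dw,
$$
which is finite precisely because $\theta>0$. Hence $\bar v$ solves a problem in $B_1$ with $L^\infty$-data controlled by $\|f\|_0^{(\theta)}+\|u\|_0^{(\theta-2s)}$.

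Applying \eqref{est-c1a} to $\bar v$ on $B_1$, and noting that $\bar v\equiv v$ near the origin so $\nabla\bar v(0)=\nabla v(0)=\rho^{\theta-2s+1}\nabla u(x_0)$, I would obtain
$$
\rho^{\theta-2s+1}|\nabla u(x_0)|\le C_0\bigl(\|f\|_0^{(\theta)}+\|u\|_0^{(\theta-2s)}\bigr),
$$
with $C_0$ depending only on $N,s,\la,\La$. Replacing $\rho$ by $d_0/2$ and taking the supremum over $x_0\in\Om$ yields \eqref{est-adimensional}. The only genuinely nonlocal difficulty, and the step that goes beyond the Gilbarg--Trudinger template, is the control of the rescaled solution's tail, handled above by the cutoff and the convergent tail integral.
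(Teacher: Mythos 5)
Your proposal follows the same strategy as the paper's proof: rescale the interior $C^{1,\beta}$ estimate \eqref{est-c1a} to the ball $B_{d(x_0)/2}(x_0)$, use $d(x_0+\rho y)\ge \rho$ on $B_1$ to convert $\|f\|_{L^\infty}$ into $\|f\|_0^{(\theta)}$, and take the supremum over $x_0$. The one place where you go beyond the paper is precisely the one place where the paper is sketchy: the scaled version of \eqref{est-c1a} carries the global norm $\|u\|_{L^\infty(\R^N)}$, and the paper merely asserts that $d(x)^{\theta-2s}\|u\|_{L^\infty(\R^N)}$ can be handled ``similarly'' to the $f$-term, which does not follow as written since $d(x)^{\theta-2s}\to+\infty$ as $d(x)\to 0$ while $\|u\|_{L^\infty(\R^N)}$ is fixed. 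Your cutoff $\bar v=\eta v$ together with the tail estimate $|v(y\pm w)|\le C\|u\|_0^{(\theta-2s)}|w|^{2s-\theta}$ and the convergent integral $\int_{|w|\ge 1}|w|^{-N-\theta}\,dw$ supplies exactly the missing localization (equivalently, it replaces $\|u\|_{L^\infty(\R^N)}$ by $\|u\|_{L^\infty(B_R)}$ plus a weighted tail term, both of which scale correctly against $d(x)^{\theta-2s}$). So your argument is correct, is the intended one, and is in fact more complete than the published proof at this step.
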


\begin{proof}
By \eqref{est-c1a} with $R=1$ we know that if
$(-\De)^s_K u=f$ in $B_1$ then there exists a constant which depends on $N,s, \la$ and
$\Lambda$ such that $\| \nabla u\|_{L^\infty (B_{1/2})} \le C ( \| f\| _{L^\infty(B_1)} + \| u\|_{L^\infty(\R^N)})$.
By a simple scaling, it can be seen that if $(-\De)^s_K u=f$ in $\Om$ and $B_R\subset \subset \Om$ then
$$
R \| \nabla u\|_{L^\infty (B_{R/2})} \le C ( R^{2s} \| f\| _{L^\infty(B_R)} + \| u\|_{L^\infty(\R^N)}).
$$
Choose a point $x\in \Om$. By applying the previous inequality in the ball $B=B_{d(x)/2}(x)$ and
multiplying by $d(x)^{\theta-2s}$ we arrive at
$$
d(x)^{\theta-2s+1} | \nabla u(x) | \le C \left( d(x)^\theta \| f\| _{L^\infty(B)} +
d(x)^{\theta-2s} \| u\|_{L^\infty(\R^N)}\right).
$$
Finally, notice that $\frac{d(x)}{2} < d(y) <\frac{3d(x)}{2}$ for every $y\in B$, so that $d(x)^\theta |f(y)|
\le 2^\theta d(y)^\theta f(y) \le 2^{2s} \| f \|_0^{(\theta)}$, this implying $d(x)^\theta \| f \|_{L^\infty(B)}
\le 2^{2s} \| f \|_0^{(\theta)}$. A similar inequality can be achieved for the term involving $\| u\|_{L^\infty(\R^N)}$.
After taking supremum, \eqref{est-adimensional} is obtained.
\end{proof}

\bigskip

Our next lemma is intended to take care of the constant in \eqref{est-1} when
we consider problem \eqref{prob-lineal} in expanding domains, since in general it depends on
$\Om$. This is the key for the
scaling method to work properly in our setting. For a $C^2$ bounded domain $\Om$, we take
$\xi \in \p\Om$, $\mu>0$ and let
$$
\mbox{$\Om^\mu:=\{y\in \R^N:\ \xi +\mu y\in \Om\}$.}
$$
It is clear then that
$d_\mu(y):={\rm dist}(y,\p \Om^\mu)=\mu^{-1} d(\xi +\mu y)$.
Let us explicitly
remark that the constant in \eqref{est-1} for the solution of
\eqref{prob-lineal} posed in $\Om^\mu$
will depend then on the domain $\Om$, but not on the dilation parameter $\mu$, as we show next.

\begin{lema}\label{lema-barrera-2}
Assume $\Om$ is a $C^2$ bounded domain, $0<s<1$ and $K$ is a measurable function verifying \eqref{elipticidad}
and \eqref{continuidad}. For every $\theta \in (s,2s)$ and $\mu_0>0$, there exist $C_0,\de>0$ such that
$$
(-\De)^s_{K_\mu} d_\mu ^{2s-\theta} \ge C_0 d_\mu^{-\theta} \quad \hbox{in } (\Om^\mu)_\de,
$$
if $0<\mu \le \mu_0$. Moreover, if $u$ verifies $(-\De)_{K_\mu}^s u \le C_1 d_\mu^{-\theta}$ in $\Om^\mu$
for some $C_1>0$ with $u=0$ in $\R^N\setminus \Om^\mu$, then
$$
u (x) \le C_2( C_1 +\|u  \|_{L^\infty(\Om^\mu)} )\; d_\mu ^{2s-\theta}
\quad \hbox{for } x \in (\Om^\mu)_\de.
$$
for some $C_2>0$ only depending on $s$, $\de$, $\theta$ and $C_0$.
\end{lema}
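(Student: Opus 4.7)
The plan is to prove the two statements in sequence: the first by repeating the contradiction argument of Lemma \ref{lema-barrera-1} while tracking the dependence on $\mu$, and the second by building a supersolution on $(\Om^\mu)_\de$ from the barrier provided by the first part.

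For the first inequality I would argue by contradiction, assuming the existence of $\theta\in(s,2s)$, $\mu_0>0$, and sequences $\mu_n\in(0,\mu_0]$, $x_n\in\Om^{\mu_n}$ with $d_n:=d_{\mu_n}(x_n)\to 0$ along which $d_n^{\theta}(-\De)^s_{K_{\mu_n}}d_{\mu_n}^{2s-\theta}(x_n)\le o(1)$. The change of variables $y=d_nz$ produces the same integral as in \eqref{contradiction-2}, with $d$ replaced by $d_{\mu_n}$ and $K(\mu_n z)$ replaced by $K(\mu_n d_n z)$. The key observation is that every ingredient used in the proof of Lemma \ref{lema-barrera-1} survives uniformly in $\mu\in(0,\mu_0]$.

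Indeed, the identity $d_\mu(y)=\mu^{-1}d(\xi+\mu y)$ shows that $d_\mu$ is Lipschitz with constant $1$ and is $C^2$ on the strip $\{d_\mu<\mu^{-1}\delta_0\}$, where $\delta_0>0$ is a smoothness scale for $\p\Om$; for $\mu\le\mu_0$ this strip has width at least $\mu_0^{-1}\delta_0$, so the Taylor expansion \eqref{taylor} is valid for large $n$ with a Hessian bound $|\nabla^2 d_{\mu_n}|=\mu_n|\nabla^2 d(\xi+\mu_n\cdot)|\le\mu_0\|\nabla^2 d\|_{L^\infty}$ uniform in $n$. Passing to a subsequence we may also assume $\nabla d_{\mu_n}(x_n)\to e_N$, and $K(\mu_n d_n z)\to 1$ uniformly on compact subsets of $\R^N$ because $\mu_n d_n\to 0$ together with \eqref{continuidad}. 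From here the outer tail bound \eqref{ineq1}, the inner Taylor bound \eqref{ineq2}, and the limit via dominated convergence are identical to the proof of Lemma \ref{lema-barrera-1}, yielding the same contradiction \eqref{contra-final}. This produces $C_0,\de>0$ independent of $\mu$.

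For the second inequality, with $\de,C_0$ from the first part, set $A:=C_0^{-1}C_1+\de^{\theta-2s}\|u\|_{L^\infty(\Om^\mu)}$ and consider $v(x):=A\,d_\mu(x)^{2s-\theta}$, extended by zero outside $\Om^\mu$. The first part gives
\[
(-\De)^s_{K_\mu}v\ge AC_0\,d_\mu^{-\theta}\ge C_1 d_\mu^{-\theta}\ge (-\De)^s_{K_\mu}u \quad\text{in }(\Om^\mu)_\de.
\]
Moreover, in $\Om^\mu\setminus(\Om^\mu)_\de$ one has $v\ge A\de^{2s-\theta}\ge\|u\|_{L^\infty(\Om^\mu)}\ge u$, while $v=u=0$ outside $\Om^\mu$. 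The comparison principle (Theorem 5.2 in \cite{CS}) applied on the bounded set $(\Om^\mu)_\de$ with exterior data $u\le v$ yields $u\le v$ in $(\Om^\mu)_\de$, which is the claimed bound with $C_2:=C_0^{-1}+\de^{\theta-2s}$ depending only on $s$, $\de$, $\theta$, $C_0$.

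The principal obstacle is the first part: all estimates in the proof of Lemma \ref{lema-barrera-1} must survive uniformly along the family of dilated domains. This is possible because of a built-in compensation—as $\mu$ decreases, the regularity scale of $\p\Om^\mu$ grows like $\mu^{-1}$, whereas the Hessian of $d_\mu$ and the argument $\mu d_n z$ of the kernel $K$ both shrink with $\mu$—which combined with the normalization \eqref{continuidad} of $K$ at the origin makes the limiting half-space integral \eqref{contra-final} independent of the dilation parameter.
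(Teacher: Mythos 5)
Your proposal is correct and follows essentially the same route as the paper: a contradiction argument repeating Lemma \ref{lema-barrera-1} with the uniform Hessian bound $\|D^2 d_{\mu_n}\|=\mu_n\|D^2 d\|\le \mu_0\|D^2 d\|$ and the convergence $K(\mu_n d_n z)\to 1$, followed by comparison with the supersolution $R\,d_\mu^{2s-\theta}$ on $(\Om^\mu)_\de$, yielding the same constant $C_2=C_0^{-1}+\de^{\theta-2s}$. The only cosmetic point is that the paper also lets the base points $\xi_n\in\p\Om$ vary along the contradicting sequence, but since all your bounds are uniform in $\xi$ this changes nothing.
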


\begin{proof}
The first part of the proof is similar to that of Lemma \ref{lema-barrera-1} but taking a little more care
in the estimates. By contradiction let us assume that there exist sequences
$\xi_n\in \p \Om$, $\mu_n \in (0,\mu_0]$ and
$$
\mbox{$x_n \in \Om^n:=\{y\in \R^N:\ \xi_n + \mu_n y\in \Om\}$},
$$ 
such that $d_n(x_n)\to 0$ and
$$
d_n(x_n)^{\theta} (-\De)^s_{K_{\mu_n}} d_n^{2s-\theta} (x_n) \le o(1).
$$
Here we have denoted
$$
\mbox{$d_n(y):={\rm dist}(y,\p \Om^n) = \mu_n^{-1} d(\xi_n+\mu_n y)$.}
$$
For $L>0$, we obtain as in Lemma \ref{lema-barrera-1}, letting $d_n=d_n(x_n)$
$$
\begin{array}{l}
\ds \int_{|z| \ge L}
\frac{2 - \left(\frac{d_n(x_n+d_n z)}{d_n}\right)^{2s-\theta}- \left(\frac{d_n (x_n-d_n z)}{d_n}\right)^{2s-\theta}}
{|z|^{N+2s}} K(\mu_n d_n z) dz\\[1.4pc]
\quad \ge \ds - 2 \Lambda \int_{ |z| \ge L} \frac{(1+|z|)^{2s-\theta}}{|z|^{N+2s}} dz.
\end{array}
$$
Moreover, we also have an equation like \eqref{taylor}. In fact taking into account that
$\| D^2 d_n \| = \mu_n \|  D^2 d\|$ is bounded we have for $|z|\le \eta <1$:
$$
d_n (x_n \pm d_n z ) \le d_n \pm d_n \nabla d_n (x_n) z + C d_n^2 |z|^2.
$$
with a constant $C>0$ independent of $n$. Hence
$$
\begin{array}{l}
\ds \int_{ |z| \le \eta }
\frac{2 - \left(\frac{d_n(x_n+d_n z)}{d_n}\right)^{2s-\theta}- \left(\frac{d_n(x_n-d_n z)}{d_n}\right)^{2s-\theta}}{|z|^{N+2s}}
K(\mu_n d_n z) dz\\[1.4pc]
\quad \ge \ds - 2 \La C  \int_{ |z| \le \eta} \frac{1}{|z|^{N-2(1-s)}} dz.
\end{array}
$$
Now observe that $d_n(x_n)\to 0$ implies in particular $d(\xi_n+\mu_n x_n) \to 0$, so that
$|\nabla d(\xi_n+\mu_n x_n)|=1$ for large $n$ and then $|\nabla d_n (x_n)|=1$. As in \eqref{extra1},
passing to a subsequence we may assume that $\nabla d_n(x_n)\to e_N$. Then
$$
\frac{d_n(x_n \pm d_n z)}{d_n} \to (1\pm z_N)_+ \qquad \hbox{as } n \to +\infty,
$$
for $\eta \le |z| \le L$ and the proof of the first part concludes as in Lemma \ref{lema-barrera-1}.

\medskip

Now let $u$ be a viscosity solution of
$$\left\{
\begin{array}{ll}
(-\De)_{K_\mu}^s u \le C_1 d_\mu^{-\theta} & \hbox{in } \Om^\mu,\\[0.35pc]
\ \ u=0 & \hbox{in }\R^N\setminus \Om^\mu.
\end{array}
\right.
$$
Choose $R>0$ and let $v=R d_\mu^{2s-\theta}$. Then
clearly
$$
\mbox{$(-\De)^s_{K_\mu} v \ge RC_0 d_\mu ^{-\theta}\ge C_1 d_\mu^{-\theta} \ge (-\De)^s_{K_\mu} u$
in $(\Om^\mu)_\de$,}
$$
if we choose $R>C_1 C_0^{-1}$. Moreover, $u=v=0$ in $\R^N\setminus \Om^\mu$ and
$v \ge R \de^{2s-\theta} \ge u$ in $\Om^\mu \setminus (\Om^\mu)_\de$ if $R$ is chosen so
that $R \de^{2s-\theta} \ge \|u\|_{L^\infty(\Om^\mu)}$. Thus by comparison $u\le v$ in
$(\Om^\mu)_\de$, which gives the desired result, with, for instance $C_2=\de^{\theta-2s} +C_0^{-1}$.
This concludes the proof.
\end{proof}

\medskip

We close this section with a statement of the strong comparison principle for
the operator $(-\De)^s_K$, which will be frequently used throughout the
rest of the paper. We include a proof for completeness (cf. Lemma 12 in \cite{LL} for a similar proof).

\begin{lema}\label{PFM}
Let $K$ be a measurable function verifying \eqref{elipticidad} and assume $u\in C(\R^N)$,
$u\ge 0$ in $\R^N$ verifies $(-\De)^s_K u \ge 0$ in the viscosity sense in $\Om$. Then
$u>0$ or $u\equiv 0$ in $\Om$.
\end{lema}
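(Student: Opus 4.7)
The plan is to argue by contradiction: assume there exists some point $x_0\in\Omega$ with $u(x_0)=0$, and show that $u$ must then vanish identically on all of $\R^N$, contradicting the alternative that $u\not\equiv 0$ in $\Omega$. The key is that at such an $x_0$, the constant function $\varphi\equiv 0$ is an admissible $C^2$ test function touching $u$ from below, since $u\ge 0$ everywhere and $u(x_0)=0$.

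Fix $r>0$ small enough that $B_r(x_0)\subset\Omega$, and let $v(y)=0$ on $B_r(x_0)$ and $v(y)=u(y)$ on $\R^N\setminus B_r(x_0)$. By the definition of viscosity supersolution for the nonlocal operator $(-\Delta)^s_K$ (in the sense of \cite{CS}), we have $(-\Delta)^s_K v(x_0)\ge 0$. Since $v(x_0)=0$, a direct computation yields
$$
(-\Delta)^s_K v(x_0)= -\int_{\R^N\setminus B_r(0)}\frac{u(x_0+y)+u(x_0-y)}{|y|^{N+2s}}\,K(y)\,dy,
$$
which is nonpositive by $u\ge 0$ and $K>0$. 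Combining with the supersolution inequality forces the integral to vanish. The ellipticity bound $K\ge\lambda>0$ and continuity of $u$ then imply $u(x_0+y)+u(x_0-y)=0$ for every $|y|>r$, and together with $u\ge 0$ this gives $u\equiv 0$ on $\R^N\setminus B_r(x_0)$.

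Finally, sending $r\downarrow 0$ along a sequence (which is permitted as long as $B_r(x_0)\subset\Omega$) and invoking continuity of $u$, we conclude $u\equiv 0$ in $\R^N\setminus\{x_0\}$, and hence $u\equiv 0$ on all of $\R^N$ since $u(x_0)=0$. This contradicts the assumption $u\not\equiv 0$ in $\Omega$, completing the proof.

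The main technical obstacle, which is really a bookkeeping matter rather than a deep difficulty, is making sure the viscosity definition is applied correctly: the test function $\varphi\equiv 0$ touches $u$ from below at $x_0$ only in the weak sense (equality may hold at other points too), so one should verify that the definition accommodates non-strict touching, which it does in the Caffarelli--Silvestre framework of \cite{CS}. All remaining steps are routine estimates using the positivity of the kernel.
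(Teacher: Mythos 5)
Your proof is correct and follows essentially the same route as the paper's: both apply the viscosity definition at the zero point $x_0$ with a test function lying below $u$ and exploit the positivity of the kernel, you simply making the explicit choice $\varphi\equiv 0$ and computing the integral directly, where the paper takes a general nonnegative test function $\phi$ and invokes the fact that $(-\De)^s_K\psi<0$ at a global minimum of a nonconstant $\psi$. As a minor bonus, your explicit computation yields the slightly stronger conclusion $u\equiv 0$ on all of $\R^N$ rather than only in $\Om$.
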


\begin{proof}
Assume $u(x_0)=0$ for some $x_0\in \Om$ but $u\not\equiv 0$ in $\Om$. Choose a nonnegative test
function $\phi \in C^2(\R^N)$ such that $u\ge \phi$ in a neighborhood $U$ of $x_0$ with $\phi(x_0)=0$
and let
$$
\psi=\left\{
\begin{array}{ll}
\phi & \hbox{in } U\\
u & \hbox{in } \R^N\setminus U.
\end{array}
\right.
$$
Observe that $\psi$ can be taken to be nontrivial since $u$ is not identically zero, by diminishing $U$ if
necessary. Since $(-\De)^s_K u\ge 0$ in $\Om$ in the viscosity sense, it follows that $(-\De)^s_K
\psi (x_0)\ge 0$. Taking into account that for a nonconstant $\psi$
we should have $(-\De)^s_K \psi < 0$ at a global minimum, we deduce that
$\psi$ is a constant function.
Moreover, since $\psi(x_0)=\phi(x_0)=0$ then
 $\psi\equiv 0$ in $\R^N$, which is a contradiction.
Therefore if $u(x_0)=0$ for some $x_0\in \Om$ we must have $u\equiv 0$ in $\Om$, as was to be shown.
\end{proof}

\medskip

\section{A priori bounds}
\setcounter{section}{3}
\setcounter{equation}{0}

In this section we will be concerned with our most important step: the obtention of a priori
bounds for positive solutions for both problems \eqref{problema} and \eqref{problema-grad}.
We begin with problem \eqref{problema}, with the essential assumption of subcriticality of
$p$, that is equation \eqref{subcritico} and assuming that $g$ verifies the growth restriction
\begin{equation}\label{crec-g-2}
|g(x,z)| \le C (1 + |z|^r), \quad x\in\Om,\ z\in \R,
\end{equation}
where $C>0$ and $0<r<p$.
\medskip

\begin{teorema}\label{cotas}
Assume $\Om$ is a $C^2$ bounded domain and $K$ a measurable function verifying
\eqref{elipticidad} and \eqref{continuidad}. Suppose $p$ is such that \eqref{subcritico} holds
and $g$ verifies \eqref{crec-g-2}. Then there exists a constant $C>0$ such that for every positive
viscosity solution $u$ of \eqref{problema} we have
$$
 \| u\|_{L^\infty (\Om)} \le C.
$$
\end{teorema}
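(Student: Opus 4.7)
The plan is to argue by contradiction using the Gidas--Spruck blow-up method adapted to the nonlocal setting. Suppose there were a sequence of positive viscosity solutions $u_n$ of \eqref{problema} with $M_n:=\|u_n\|_{L^\infty(\Om)}=u_n(x_n)\to\infty$ for some $x_n\in\Om$. I would set $\mu_n:=M_n^{-(p-1)/(2s)}\to 0$ and introduce the rescaled functions
$$
v_n(y):=M_n^{-1}u_n(x_n+\mu_n y),\qquad y\in\Om^n:=\mu_n^{-1}(\Om-x_n),
$$
so that $0\le v_n\le v_n(0)=1$, $v_n=0$ in $\R^N\setminus\Om^n$, and a direct change of variables yields
$$
(-\De)^s_{K_{\mu_n}} v_n = v_n^p + \mu_n^{\frac{2sp}{p-1}}\, g(x_n+\mu_n y,\, M_n v_n)\quad\hbox{in }\Om^n.
$$
By \eqref{crec-g-2} and the assumption $r<p$, the perturbation is dominated by $C\mu_n^{\frac{2sp}{p-1}}+C\mu_n^{\frac{2s(p-r)}{p-1}}v_n^r$ and tends to $0$ uniformly.

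I would then split into two cases according to the behavior of $d(x_n)/\mu_n$. If $d(x_n)/\mu_n\to\infty$, the sets $\Om^n$ exhaust $\R^N$; the interior H\"older estimate \eqref{est-ca}, Arzel\`a--Ascoli, a diagonal argument and \eqref{continuidad} extract a limit $v\in C(\R^N)$ with $0\le v\le 1$, $v(0)=1$, and by stability of viscosity solutions $(-\De)^s v=v^p$ in $\R^N$. This contradicts the fractional Liouville theorem in the subcritical range \eqref{subcritico} recalled in \cite{ZCCY,CLO1,QX,FW}.

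Otherwise $d(x_n)/\mu_n$ stays bounded, and I would pick $\xi_n\in\p\Om$ with $|x_n-\xi_n|=d(x_n)$, rescale around $\xi_n$ by $\tilde v_n(y):=M_n^{-1}u_n(\xi_n+\mu_n y)$ on $\tilde\Om^n:=\mu_n^{-1}(\Om-\xi_n)$, and set $y_n:=(x_n-\xi_n)/\mu_n$, so that $\tilde v_n(y_n)=1$. After rotating so the inner normal at $\xi_n$ points along $e_N$, the $C^2$ regularity of $\p\Om$ makes $\tilde\Om^n$ converge locally to $\R^N_+$. The missing ingredient---and, I expect, the main obstacle---is uniform control of $\tilde v_n$ up to the boundary, which is precisely what Lemma \ref{lema-barrera-2} supplies: fixing $\theta\in(s,2s)$ and noting that the right-hand side for $\tilde v_n$ is bounded independently of $n$ (hence $\le C\de^{\theta}d_{\mu_n}^{-\theta}$ in $(\tilde\Om^n)_\de$), the lemma yields the uniform decay
$$
\tilde v_n(y)\le C\, d_{\mu_n}(y)^{2s-\theta}\quad\hbox{in }(\tilde\Om^n)_\de,
$$
with $C$ independent of $n$.

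Combining this boundary decay with \eqref{est-ca} on the interior gives equicontinuity of $\{\tilde v_n\}$ on compacts of $\overline{\R^N_+}$ and produces a locally uniform subsequential limit $\tilde v\in C(\R^N)$ with $0\le \tilde v\le 1$ and $\tilde v\equiv 0$ in $\R^N\setminus\R^N_+$. Evaluating the decay at $y_n$ forces $d(x_n)/\mu_n\ge c>0$, so $y_0:=\lim y_n$ lies in the open half-space and $\tilde v(y_0)=1$; passing to the viscosity limit in the equation (using \eqref{continuidad} so that $K_{\mu_n}\to 1$) yields a nontrivial bounded nonnegative solution of $(-\De)^s\tilde v=\tilde v^p$ in $\R^N_+$ vanishing outside, which is ruled out by the half-space Liouville theorem of \cite{QX,FW}. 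The contradiction gives the desired a priori bound. Beyond the delicate construction of boundary barriers (already handled in Section \ref{s2}), the remaining care needed is in the stability argument for viscosity solutions on the moving domains $\tilde\Om^n$, where the uniform decay estimate is essential both for equicontinuity and for guaranteeing that the limit vanishes on the complementary half-space.
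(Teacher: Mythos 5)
Your proposal is correct and follows essentially the same route as the paper's proof: the same rescaling with $\mu_n=M_n^{-(p-1)/(2s)}$, the same dichotomy on $d(x_n)/\mu_n$, the use of Lemma \ref{lema-barrera-2} to obtain the uniform boundary decay that both forces the blow-up point into the interior of the half-space and controls the limit, and the same two Liouville theorems to conclude. The only detail the paper adds is a brief regularity bootstrap (via \cite{S}) so that the limit functions are classical solutions before the Liouville theorems of \cite{ZCCY} and \cite{QX,FW} are invoked.
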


\begin{proof}
Assume on the contrary that there exists a sequence of positive solutions $\{u_k\}$ of \eqref{problema}
such that $M_k=\| u_k \|_{L^\infty(\Om)} \to +\infty$. Let $x_k\in \Om$ be points with
$u_k(x_k) =M_k $ and introduce the functions
$$
v_k(y)= \frac{u_k(x_k+\mu_k y)}{M_k}, \quad y\in \Om^k,
$$
where $\mu_k=M_k^{-\frac{p-1}{2s}}\to 0$ and
$$
\mbox{$\Om^k:=\{y\in \R^N:\ x_k+\mu_k y\in \Om\}$.}
$$
Then $v_k$ is a function verifying $0< v_k \le 1$, $v_k(0)=1$ and
\begin{equation}\label{rescale-1}
(-\De)^s_{K_k} v_k = v_k^p + h_k \quad \hbox{in } \Om^k
\end{equation}
where $K_k(y)=K(\mu_k y)$ and $h_k	\in C(\Om^k)$ verifies $|h_k|\le C M_k^{r-p}$.

By passing to subsequences, two situations may arise: either $d(x_k) \mu_k^{-1} \to +\infty$
or  $d(x_k) \mu_k^{-1} \to d \ge 0$.

\medskip

Assume the first case holds, so that $\Om^k \to \R^N$ as $k\to +\infty$. Since the right hand
side in \eqref{rescale-1} is uniformly bounded and $v_k\le 1$, we may use
estimates \eqref{est-ca} with an application of Ascoli-Arzel\'a's theorem and a diagonal argument
to obtain that $v_k \to v$ locally uniformly in $\R^N$. Passing to the limit in
\eqref{rescale-1} and using that $K$ is continuous at zero with $K(0)=1$, we see that
$v$ solves $(-\De)^s v= v^p$ in $\R^N$ in the viscosity sense (use for instance Lemma 5 in \cite{CS2}).

By standard regularity (cf. for instance Proposition 2.8 in \cite{S})
we obtain $v\in C^{2s+\al}(\R^N)$ for some $\al \in (0,1)$. Moreover, since $v(0)=1$, the strong
maximum principle implies $v>0$. Then by bootstrapping using again Proposition 2.8 in \cite{S}
we would actually have $v\in C^\infty(\R^N)$. In particular we deduce that $v$ is a strong
solution of $(-\De)^s v=v^p$ in $\R^N$ in the sense of \cite{ZCCY}. However,
since $p<\frac{N+2s}{N-2s}$, this contradicts for instance Theorem 4 in \cite{ZCCY} (see also \cite{CLO1}).

\medskip

If the second case holds then we may assume $x_k\to x_0\in \p\Om$. With no loss of generality
assume also $\nu (x_0)=-e_N$. In this case, rather than working
with the functions $v_k$, it is more convenient to deal with
$$
w_k(y)= \frac{u_k(\xi_k+\mu_k y)}{M_k}, \quad y\in D^k,
$$
where $\xi_k\in \p\Om$ is the projection of $x_k$ on $\p\Om$ and
\begin{equation}\label{Dk}
\mbox{$D^k:=\{y\in \R^N:\
\xi_k+\mu_k y \in \Om\}$.}
\end{equation}
Observe that
\begin{equation}\label{cero}
0\in \p D^k,
\end{equation}
and
$$\mbox{$D^k \to \R^N_+=\{y\in \R^N:\ y_N>0\}$ as $k\to +\infty$.}
$$
It also follows that $w_k$ verifies \eqref{rescale-1} in $D^{k}$
with a slightly different function $h_k$, but with the same bounds.

Moreover, setting
$$
y_k:=\frac{x_k-\xi_k}{\mu_k},
$$
so that $|y_k|= d(x_k)\mu_k^{-1}$, we see that
$w_k(y_k)=1$. We claim that $d=\lim_{k\to +\infty} d(x_k) \mu_k^{-1}>0$. This in particular guarantees that
by passing to a further subsequence $y_k\to y_0$, where $|y_0|=d>0$, thus $y_0$ is in the 
interior of the half-space $\R^N_+$.

\medskip

Let us show the claim. Observe that by \eqref{rescale-1}, and since $r<p$, we have 
$$
(-\Delta)^{s}_{K_k}w_k\leq C\leq C_1 d_k^{-\theta} \quad \hbox{in } D^k
$$ 
for every $\theta \in (s,2s)$, where $d_k(y)={\rm dist}(y,\p D^k)$. 
By Lemma \ref{lema-barrera-2}, fixing any such $\theta$, there exist
constants $C_0>0$ and $\de>0$ such that $w_k(y) \le C_0 d_k(y)^{2s-\theta}$ if $d_k(y) < \de$.
In particular, since by \eqref{cero} $|y_k|\ge d_k(y_k)$, 
if $d_k(y_k) <\de$, then $1\le C_0 d_k(y_k)^{2s-\theta} \le C_0
|y_k|^{2s-\theta}$, which implies $|y_k|$ is bounded from
below so that $d>0$.

Now we can employ 	\eqref{est-ca} as above to obtain that $w_k\to w$ uniformly on
compact sets of $\R^N_+$, where $w$ verifies $0\le w \le 1$ in $\R^N_+$, $w(y_0)=1$ and
$w(y) \le C y_N^{2s-\theta}$ for $y_N <\de$. Therefore $w\in C(\R^N)$ is a nonnegative, bounded
solution of
$$
\left\{
\begin{array}{ll}
(-\De)^s w = w^p & \hbox{in } \R^N_+,\\[0.25pc]
w=0 & \hbox{in } \R^N \setminus \R^N_+.
\end{array}
\right.
$$
Again by bootstrapping and the strong maximum principle we have $w\in C^\infty (\R^N_+)$, $w>0$.
Since $p<\frac{N+2s}{N-2s}<\frac{N-1+2s}{N-1-2s}$, this is a contradiction with Theorem 1.1 
in \cite{QX} (cf. also Theorem 1.2 in \cite{FW}). This contradiction proves the theorem.
\end{proof}

\bigskip

We now turn to analyze the a priori bounds for solutions of problem \eqref{problema-grad}.
We have already remarked that due to the expected singularity of the gradient of the solutions
near the boundary we need to work in spaces with weights which take care of the singularity.
Thus we fix $\sigma \in (0,1)$ verifying
\begin{equation}\label{cond-sigma}
0<\sigma< 1-\frac{s}{t}<1
\end{equation}
and let
\begin{equation}\label{E}
E_\sigma=\{u\in C^1(\Om): \ \| u\|_1^{(-\sigma)}<+\infty\},
\end{equation}
where $\| \cdot \|_1^{(-\sigma)}$
is given by \eqref{norma-c1} with $\theta=-\sigma$. As for the function $h$, we assume that it has a
prescribed growth at infinity: there exists $C^{0}>0$ such that for every $x\in\Om$, $z\in\R$ and $\xi\in\R^N$, 
\begin{equation}\label{crec-h-2}
|h(x,z,\xi)| \le C^0 (1 + |z|^r + |\xi|^t), 
\end{equation}
where $0<r<p$ and $1<t<\frac{2sp}{p+2s-1}<2s$ (observe that there is no loss of generality in 
assuming $t>1$). We recall that in the present situation we require the stronger 
restriction \eqref{subserrin} 
on the exponent $p$.

\medskip

Then we can prove:

\begin{teorema}\label{cotas-grad}
Assume $\Om$ is a $C^2$ bounded domain and $K$ a measurable function verifying
\eqref{elipticidad} and \eqref{continuidad}. Suppose that $s>\frac{1}{2}$, $p$ verifies \eqref{subserrin}
and $h$ is nonnegative and such that \eqref{crec-h-2}
holds. Then there exists a constant $C>0$ such that for every positive solution $u$ of \eqref{problema-grad}
in $E_\sigma$ with $\sigma$ satisfying \eqref{cond-sigma} we have
$$
 \| u\|_1^{(-\sigma)} \le C.
$$
\end{teorema}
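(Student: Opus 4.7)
My plan follows the two-stage strategy sketched in the introduction: first a rough universal $L^\infty$ bound from the doubling lemma of \cite{PQS}, then an upgrade to the weighted $C^1$ bound via a boundary scaling argument that contradicts a half-space Liouville theorem.

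For the rough bound, I apply the doubling lemma to the scaling-invariant quantity
$$
M(x):=u(x)^{(p-1)/(2s)}+|\na u(x)|^{(p-1)/(p+2s-1)},
$$
tailored to the natural rescaling $u\mapsto\la^{2s/(p-1)}u(\la\cdot)$ that preserves $(-\De)^s u=u^p$. Arguing by contradiction, if $M$ is unbounded along a sequence of positive solutions $\{u_k\}$, the doubling lemma provides points $\tilde x_k\in\Om$ and scales $\mu_k:=1/M_{u_k}(\tilde x_k)\to 0$ enjoying the doubling property on rescaled balls of radius going to infinity. Introducing the rescaled functions $v_k(y):=\mu_k^{2s/(p-1)}u_k(\tilde x_k+\mu_k y)$, the hypotheses $r<p$ and $t<2sp/(p+2s-1)$ together with \eqref{continuidad} make the equation for $v_k$ converge, on compact sets, to $(-\De)^s v=v^p$ on either $\R^N$ (if $d(\tilde x_k)/\mu_k\to+\infty$) or a half-space (otherwise). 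Equicontinuity from \eqref{est-ca} and \eqref{est-c1a} lets me extract a locally uniform limit $v$, a bounded positive solution. The Liouville theorems of \cite{ZCCY,CLO1} in $\R^N$ and of \cite{QX,FW} in the half-space give the contradiction; the strengthened subcriticality \eqref{subserrin} enters precisely because the rescaled solutions are only locally bounded, requiring the entire Liouville result in its form valid for $p<N/(N-2s)$. Hence $M\le C$ on $\Om$, and in particular $\|u\|_\infty\le C$.

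With the uniform $L^\infty$ bound in hand, suppose by contradiction that $N_k:=\|u_k\|_1^{(-\sigma)}\to+\infty$ for a sequence of positive solutions of \eqref{problema-grad}. Interior regularity \eqref{est-c1a} combined with the rough bound yields uniform control on $u_k$ and $\na u_k$ over every compact subset of $\Om$, so the blow-up of $N_k$ must concentrate near $\p\Om$. Pick almost-maximizing points $x_k\in\Om$ with $d_k:=d(x_k)\to 0$, project to $\xi_k\in\p\Om$, and rescale $v_k(y):=u_k(\xi_k+\mu_k y)/A_k$ on the expanding domains $D^k$ from \eqref{Dk}, with scales $\mu_k,A_k\to 0$ tuned so as to normalize the weighted seminorm of $v_k$ on $D^k$ while keeping $v_k$ or $|\na v_k|$ bounded away from zero at the rescaled point $y_k:=(x_k-\xi_k)/\mu_k$ (which remains in a bounded region of $\overline{\R^N_+}$). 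Lemma \ref{lema-barrera-2}, whose uniformity in the dilation parameter $\mu$ is essential here, transfers the weighted information into a barrier bound $v_k(y)\le C\, d_{D^k}(y)^{2s-\theta}$ up to $\p D^k$; together with \eqref{est-c1a} this gives the compactness needed to extract a locally uniform limit on $\R^N_+$. The exponent condition $t<2sp/(p+2s-1)$ is exactly what makes the $h$-contribution of the rescaled equation negligible as $k\to+\infty$, while \eqref{continuidad} forces $K_{\mu_k}\to 1$, so the limit $v\in C(\R^N)\cap C^1(\R^N_+)$ is a nontrivial bounded solution of a half-space problem for the fractional Laplacian, vanishing outside $\R^N_+$. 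The half-space Liouville theorem of \cite{QX} or \cite{FW}, which holds under the subcriticality implied by \eqref{subserrin}, excludes such a $v$ and yields the contradiction. The main obstacle I anticipate is the careful calibration of the rescaling parameters $\mu_k,A_k$ in this second step so that the weighted seminorm, the location of the rescaled point, and the clean form of the limit equation are all simultaneously respected; Lemma \ref{lema-barrera-2} and the sharp exponent restriction on $t$ are precisely the ingredients designed to make this possible.
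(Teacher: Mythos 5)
Your two-step architecture (a rough universal bound via the doubling lemma of \cite{PQS}, then a boundary blow-up in the weighted space $E_\sigma$ contradicting a half-space Liouville theorem) is the same as the paper's, and your second step matches the paper's proof in outline: almost-maximizing points for $M_k(x)=d(x)^{-\sigma}u_k(x)+d(x)^{1-\sigma}|\nabla u_k(x)|$, rescaling about the boundary projection $\xi_k$ on the domains $D^k$, Lemma \ref{lema-barrera-2} together with Lemma \ref{lema-regularidad} to keep $y_k$ away from $\partial D^k$ and to get compactness up to the boundary, and finally Theorem 1.2 of \cite{FW} (or \cite{QX}). The calibration you defer is carried out in the paper with $\mu_k=M_k(x_k)^{-(p-1)/(2s+\sigma(p-1))}$ and the normalization $v_k=u_k(\xi_k+\mu_k\cdot)/(\mu_k^{\sigma}M_k(x_k))$; the restrictions $t<\frac{2sp}{p+2s-1}$ and \eqref{cond-sigma} are used exactly to guarantee $s<(1-\sigma)t<2s$ and $\sigma(t-1)+2s-t>0$, on which the barrier step and the nontriviality of the limit hinge.

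The genuine gap is in your first step. After the doubling-lemma rescaling, the functions $v_k(y)=\nu_k^{2s/(p-1)}u_k(x_k+\nu_k y)$ are controlled only on $B_k=B(0,k)$; outside $B_k$ the original solutions may be arbitrarily large at scale $\nu_k^{-2s/(p-1)}$, so the nonlocal operator applied to $v_k$ has uncontrolled tails and you \emph{cannot} pass to the limit in the equation. The limit is therefore not a ``bounded positive solution of $(-\De)^s v=v^p$'' in $\R^N$ or a half-space, and the Liouville theorems of \cite{ZCCY,CLO1,QX,FW} (which concern solutions and hold up to $(N+2s)/(N-2s)$) are not what closes the argument; indeed there is no half-space alternative at all in this step, since the doubling lemma forces $B(x_k,kN_k(x_k)^{-1})\subset\Om$. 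The paper's device is to truncate: set $w_k=v_k\chi_{B_k}$. Since $v_k\ge 0$, for $y$ in $B_k$ one has $(-\De)^s_{K_k}w_k(y)\ge(-\De)^s_{K_k}v_k(y)=v_k(y)^p+h_k(y)\ge w_k(y)^p+h_k(y)$ with $h_k\to 0$ uniformly, and the $w_k$ are uniformly bounded on all of $\R^N$, so the limit $v$ satisfies only the supersolution inequality $(-\De)^s v\ge v^p$ in $\R^N$. The contradiction then comes from the Liouville theorem for \emph{supersolutions} (Theorem 1.3 in \cite{FQ2}), which is valid precisely for $p<\frac{N}{N-2s}$; this — not the local boundedness issue you cite — is where \eqref{subserrin} is genuinely used (cf.\ Remarks \ref{comentario} (b)). Without this truncation-to-supersolution step your rough bound, and hence the whole proof, does not go through.
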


\bigskip

We prove the a priori bounds in two steps. In the first one we obtain rough bounds for all solutions of
the equation which are universal, in the spirit of \cite{PQS}. It is here where the restriction
\eqref{subserrin} comes in. 

\begin{lema}\label{cotas-pqs}
Assume $\Om$ is a $C^2$ (not necessarily bounded) domain and $K$ a measurable function verifying
\eqref{elipticidad} and \eqref{continuidad}. Suppose that $s>\frac{1}{2}$ and $p$ verifies
\eqref{subserrin}. Then there exists a positive constant
$C=C(N,s,p,r,t,C^0,\Om)$
(where $r$, $t$ and $C^0$ are given in \eqref{crec-h-2})
such that for
every positive function $u\in C^1(\Om)\cap L^\infty(\R^N)$ verifying $(-\De)^s_K
u= u^p +h(x,u,\nabla u)$ in the viscosity sense in $\Om$, we have
$$
u(x) \le C (1+{\rm dist}(x,\p \Om)^{-\frac{2s}{p-1}}) ,\quad |\nabla u(x)| \le C (1+{\rm dist}(x,\p\Om)
^{-\frac{2s}{p-1}-1})
$$
for $x\in \Om$.
\end{lema}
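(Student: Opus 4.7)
The plan is to adapt the doubling--rescaling method of \cite{PQS} to the present nonlocal setting with gradient dependence. Since the conclusion asks for pointwise bounds on both $u$ and $\na u$, I introduce the scale-invariant quantity
$$
M(x) := u(x)^{\frac{p-1}{2s}} + |\na u(x)|^{\frac{p-1}{p+2s-1}}, \qquad x\in \Om,
$$
whose exponents are tuned to the natural scaling $u\mapsto \la^{2s/(p-1)}u(y_0+\la\,\cdot\,)$ of $(-\De)^s u=u^p$: under this rescaling $M$ transforms as $\la^{-1}M$. The claim of the lemma is equivalent to $M(x)\,d(x)\le C$, so I argue by contradiction: if this fails, there exist positive solutions $u_n$ and points $x_n\in\Om$ such that $M_n(x_n)\,d(x_n)\to +\infty$.

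Next I apply the doubling lemma of \cite{PQS} with parameters $k_n\to +\infty$, obtaining new points $y_n\in\Om$ satisfying $M_n(y_n)\ge M_n(x_n)$, $M_n(y_n)\,d(y_n)\to +\infty$, and
$$
M_n(z)\le 2 M_n(y_n)\quad\hbox{whenever } |z-y_n|\le k_n/M_n(y_n).
$$
Setting $\la_n:=M_n(y_n)^{-1}\to 0$ and $v_n(z):=\la_n^{2s/(p-1)}u_n(y_n+\la_n z)$, an explicit computation shows that $v_n$ is a viscosity solution of
$$
(-\De)^s_{K_n}v_n=v_n^p+\tilde h_n(z,v_n,\na v_n)\qquad\hbox{in }\Om_n^\ast:=\la_n^{-1}(\Om-y_n),
$$
where $K_n(w)=K(\la_n w)$ and $\tilde h_n$ is the naturally rescaled perturbation. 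Using \eqref{crec-h-2} together with $r<p$ and $t<2sp/(p+2s-1)$, each summand in $\tilde h_n$ carries a strictly positive power of $\la_n$ and therefore tends to zero uniformly on bounded sets. Meanwhile $v_n(0)^{(p-1)/(2s)}+|\na v_n(0)|^{(p-1)/(p+2s-1)}=1$, the same quantity remains $\le 2$ on $B_{k_n}(0)$, and $B_{k_n}(0)\subset \Om_n^\ast$ for large $n$, since $k_n\la_n=k_n/M_n(y_n)\le d(y_n)/2$ eventually.

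From this point the argument mimics the blow-up step of Theorem~\ref{cotas}. The interior $C^{1,\beta}$ estimate~\eqref{est-c1a} yields local $C^1$ compactness of $\{v_n\}$; along a subsequence $v_n\to v$ and $\na v_n\to \na v$ locally uniformly on $\R^N$. Condition \eqref{continuidad} implies $K_n\to 1$ locally uniformly, so by viscosity stability for integro-differential operators (cf.\ Lemma~5 of~\cite{CS2}) the limit $v\in C^1(\R^N)$ is a bounded nonnegative viscosity solution of $(-\De)^s v=v^p$ in $\R^N$. The normalization at $0$ forbids $v\equiv 0$; bootstrapping the regularity via~\cite{S,CS} and applying Lemma~\ref{PFM} yield $v>0$ in $\R^N$. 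The restriction~\eqref{subserrin} allows us to invoke the Liouville-type result (cf.~\cite{ZCCY,CLO1}), producing the desired contradiction.

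The main technical obstacle is the passage to the limit in the nonlocal operator: the doubling bound only controls $v_n$ inside $B_{k_n}(0)$, while the integral defining $(-\De)^s_{K_n}v_n$ involves values of $v_n$ at arbitrarily large scales, where the only a priori information comes from $u_n\in L^\infty(\R^N)$ and is not uniform in $n$. This tail issue is precisely what forces the stronger subcriticality~\eqref{subserrin} in place of \eqref{subcritico}: the Serrin-type exponent is needed to ensure that the rescaled tail contributions vanish in the limit, so that $v$ really does solve $(-\De)^s v=v^p$ on the whole of $\R^N$. Once this is addressed, the remaining pieces---interior regularity, strong maximum principle, and the Liouville theorem---are standard.
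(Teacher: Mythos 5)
Your setup (the scale-invariant quantity $N_k(x)=u_k(x)^{\frac{p-1}{2s}}+|\nabla u_k(x)|^{\frac{p-1}{p+2s-1}}$, the doubling lemma of \cite{PQS}, the rescaling $v_k(y)=\nu_k^{2s/(p-1)}u_k(x_k+\nu_k y)$ with $\nu_k=N_k(x_k)^{-1}$, the smallness of the rescaled perturbation, and the local $C^{1,\beta}$ compactness) coincides with the paper's. But the proof has a genuine gap exactly at the point you flag as ``the main technical obstacle'' and then defer: you assert that the limit $v$ solves $(-\De)^s v=v^p$ in all of $\R^N$ and invoke the Liouville theorems for \emph{solutions} (\cite{ZCCY,CLO1}), claiming that the Serrin exponent \eqref{subserrin} ``ensures that the rescaled tail contributions vanish in the limit.'' No argument is given for this, and none is available: outside $B_{k}$ the only information on $v_k$ is $\nu_k^{2s/(p-1)}\|u_k\|_{L^\infty(\R^N)}$, which is not uniformly controlled, so one cannot pass to the limit in the full nonlocal operator. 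Indeed, Remark \ref{comentario}(b) of the paper states precisely that this passage to the limit is the obstruction and that the authors do \emph{not} know how to obtain the equation for $v$ in $\R^N$; the exponent \eqref{subserrin} plays no role in killing the tails.

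The paper's actual resolution is different and is the one idea your proposal is missing: truncate, rather than pass to the limit in the equation. Set $w_k=v_k$ in $B_k$ and $w_k=0$ outside. Since $v_k\ge 0$, replacing $v_k$ by the smaller function $w_k$ in the integral can only increase $(-\De)^s_{K_k}$ at points of $B_k$, so $(-\De)^s_{K_k}w_k\ge w_k^p$ in $B_k$ (up to the vanishing error $h_k$). Now the functions $w_k$ are uniformly bounded on all of $\R^N$ by the doubling estimate, so the stability lemma (Lemma 5 of \cite{CS2}) applies and yields in the limit only the differential \emph{inequality} $(-\De)^s v\ge v^p$ in $\R^N$, with $v$ nontrivial by the normalization at $0$. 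The contradiction then comes from the Liouville theorem for \emph{supersolutions}, Theorem 1.3 in \cite{FQ2}, whose threshold is the Serrin exponent $p<\frac{N}{N-2s}$ --- this, and not any tail cancellation, is where hypothesis \eqref{subserrin} enters. As written, your proof cannot be completed along the route you describe; it needs to be rerouted through the truncation/supersolution argument (or an equivalent device).
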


\begin{proof}
Assume on the contrary that there exist sequences of
positive functions $u_k\in C^1(\Om)\cap L^\infty (\R^N)$ verifying $(-\De)^s_K u_k= u_k^p
+h(x,u_k,\nabla u_k)$ in $\Om$ and points $y_k\in \Om$ such that
\begin{equation}\label{hipo}
u_k(y_k)^\frac{p-1}{2s} + |\nabla u_k(y_k)|^\frac{p-1}{p+2s-1} > 2k\: (1+{\rm dist}(y_k,\p \Om)^{-1}).
\end{equation}
Denote $N_k(x)=u_k(x)^\frac{p-1}{2s} + |\nabla u_k(x)|^\frac{p-1}{p+2s-1}$, $x\in \Om$. By Lemma
5.1 in \cite{PQS} (cf. also Remark 5.2 (b) there) there exists a sequence of points $x_k\in \Om$ with the
property that $N_k(x_k) \ge N_k(y_k)$, $N_k(x_k)>2k\: {\rm dist}(x_k,\p \Om)^{-1}$ and
\begin{equation}\label{conjuntos}
\mbox{$N_k(z) \le 2 N_k(x_k)$ in $B(x_k, kN_k(x_k)^{-1})$.}
\end{equation}
Observe that, in particular, \eqref{hipo} implies that $N_k(x_k)\to +\infty$.
Let $\nu_k := N_k(x_k)^{-1}\to 0$ and
define
\begin{equation}\label{v_k}
v_k(y) :=  \nu_k^\frac{2s}{p-1} u_k (x_k+\nu _k y), \quad y \in B_k:=\{y\in \R^N:\ |y|<k\}.
\end{equation}
Then the functions $v_k$ verify $(-\De)^s_{K_k} v_k= v_k^p +  h_k$ in $B_k$, where
$K_k(y) = K (\mu_k y)$ and
$$
h_k (y)=\nu_k ^{\frac{2sp}{p-1}} h(\xi_k+\nu_k y ,\nu_k^{-\frac{2s}{p-1}}
v_k(y),\nu_k(x_k)^{-\frac{2s+p-1}{p-1}} \nabla v_k(y)).
$$
Since $h$ verifies \eqref{crec-h-2}, we have $| h_k| \le C_0 \nu_k^{\gamma}
(1+v_k^r+|\nabla v_k|^t)$ in $B_k$, where
$$
\gamma= \max\left\{ \frac{2s(p-r)}{p-1}, \frac{2ps-(2s+p-1)t}{p-1}\right\}  >0.
$$
Moreover by \eqref{conjuntos} it follows that
\begin{equation}\label{eq1}
v_k(y)^\frac{p-1}{2s} + |\nabla v_k(y)|^\frac{p-1}{p+2s-1}\le 2, \quad y\in B_k .
\end{equation}
Also it is clear that 
\begin{equation}\label{eq2}
v_k(0)^\frac{p-1}{2s} + |\nabla v_k(0)|^\frac{p-1}{p+2s-1} =1.
\end{equation}
Since $\nu_k\to 0$ and $v_k$ and $|\nabla v_k|$ are uniformly bounded in $B_k$, we see that
$ h_k$ is also uniformly bounded in $B_k$. We may then
use estimate \eqref{est-c1a} to obtain, again with the use of Ascoli-Arzel\'a's theorem
and a diagonal argument, that there exists a subsequence, still labeled $v_k$ such that
$v_k\to v$ in $C^1_{\rm loc}(\R^N)$ as $k\to +\infty$. Since $v(0)^\frac{p-1}{2s} + |\nabla
v(0)|^\frac{p-1}{p+2s-1} =1$, we see that $v$ is nontrivial. 

Now let $w_k$ be the functions obtained by extending $v_k$ to be zero outside $B_k$. Then it
is easily seen that $(-\De)^s_{K_k} w_k\ge w_k^p$ in $B_k$. Passing to the limit using again Lemma 5 of \cite{CS2},
we arrive at $(-\De)^s v \ge v^p$ in $\R^N$, which contradicts Theorem 1.3 in \cite{FQ2} since
$p<\frac{N}{N-2s}$. This concludes the proof.
\end{proof}

\bigskip

\begin{obss}\label{comentario} {\rm \

\noindent (a) With a minor modification in the above proof, it can be seen that
the constants given by Lemma \ref{cotas-pqs} can be taken independent of the domain $\Om$ 
(cf. the proof of Theorem 2.3 in \cite{PQS}).

\medskip
\noindent (b) We expect Lemma \ref{cotas-pqs} to hold in the full range given by \eqref{subcritico}.
Unfortunately, this method of proof seems purely local and needs to be properly adapted to
deal with nonlocal equations. Observe that there is no information available for the functions
$v_k$ defined in \eqref{v_k} in $\Om\setminus B_k$, which makes it difficult to pass to the limit appropriately in the
equation satisfied by $v_k$.
}\end{obss}

\bigskip

We now come to the proof of the a priori bounds for positive solutions of \eqref{problema-grad}.

\medskip

\begin{proof}[Proof of Theorem \ref{cotas-grad}]
Assume that the conclusion of the theorem is not true. Then there exists a sequence of positive
solutions $u_k\in E_\sigma$ of \eqref{problema-grad} such
that $\| u_k \|_1^{(-\sigma)}\to +\infty$, where $\sigma$ satisfies \eqref{cond-sigma}. Define
$$
M_k(x)= d(x)^{-\sigma} u_k(x) + d(x)^{1-\sigma} |\nabla u_k(x)|.
$$
Now choose points $x_k\in \Om$ such that $M_k(x_k) \ge \sup_\Om
M_k -\frac{1}{k}$ (this supremum may not be achieved). 
Observe that our assumption implies $M_k(x_k)\to +\infty$.

Let $\xi_k$ be a projection of $x_k$ on $\p \Om$ and introduce the functions:
$$
v_k(y) = \frac{u_k(\xi_k + \mu_k y)}{\mu_k^\sigma M_k(x_k)}, \quad y\in D^k,
$$
where $\mu_k=M_k(x_k)^{-\frac{p-1}{2s+\sigma(p-1)}}\to 0$ and $D^{k}$ is the set defined in \eqref{Dk}. It is not hard to see that
\begin{equation}\label{eq-rescalada}
\left\{
\begin{array}{ll}
(-\De)_{K_k}^s v_k = v_k^p + h_k  & \hbox{in } D^k,\\[0.35pc]
\ \ v_k=0 & \hbox{in }\R^N \setminus D^k,
\end{array}
\right.
\end{equation}
where  $K_k(y) = K (\mu_k y)$ and
$$
h_k (y)\hspace{-1mm}=\hspace{-1mm}M_k(x_k)^{-\frac{2sp}{2s+\sigma (p-1)}} h(\xi_k+\mu_k y ,M_k(x_k)^\frac{2s}{2s+\sigma(p-1)} v_k,
M_k(x_k)^\frac{2s+p-1}{2s+\sigma(p-1)} \nabla v_k).
$$
By assumption \eqref{crec-h-2} on $h$, it is readily seen that $h_k$ verifies the inequality
$|h_k|\le C M_k(x_k) ^{-\bar \gamma} (1+ v_k^r+ |\nabla v_k|^t)$ for
some positive constant $C$ independent of $k$, where
$$
\bar \gamma=\frac{2sp}{2s+\sigma(p-1)} -\frac{\max\{2sr, (2s+p-1)t\}}{2s+\sigma(p-1)}>0.
$$
Moreover, the functions $v_k$ verify
$$
\mu_k^\sigma d(\xi_k+\mu_k y)^{-\sigma} v_k(y)+ \mu_k^{\sigma-1} d(\xi_k+\mu_k y)^{1-\sigma} |\nabla v_k(y)|
=\frac{M_k(\xi_k+\mu_k y) }{M_k(x_k)}.
$$
Then, using that  $\mu_k ^{-1} d(\xi_k+\mu_k y)={\rm dist}(y,\p D^k)=:d_k(y)$ and the choice of the points
$x_k$, we obtain for large $k$
\begin{equation}\label{eq-normal-1}
d_k(y)^{-\sigma} v_k(y)+d_k(y)^{1-\sigma} |\nabla v_k(y)| \le 2 \quad \mbox{ in } D^k
\end{equation}
and
\begin{equation}\label{eq-normal-2}
d_k(y_k)^{-\sigma} v_k(y_k)+d_k(y_k)^{1-\sigma} |\nabla v_k(y_k)| =1,
\end{equation}
where, as in the proof of Theorem \ref{cotas}, $y_k :=\mu_k^{-1}(x_k-\xi_k)$.

Next, since $u_k$ solves \eqref{problema-grad}, we may use Lemma \ref{cotas-pqs} 
to obtain that $M_k(x_k) \le C d(x_k)^{-\sigma}(1+d(x_k)^{-\frac{2s}{p-1}})$
for some positive constant independent of $k$, which implies $d(x_k) \mu_k^{-1}\le C$.
This bound immediately entails that (passing to subsequences) $x_k\to x_0\in \p\Om$ and
$|y_k|=d(x_k)\mu_k^{-1}\to d\ge 0$ (in particular the points $\xi_k$ are uniquely determined
at least for large $k$). Assuming that the outward
unit normal to $\p \Om$ at $x_0$ is $-e_N$, we also obtain then that $D^k \to \R^N_+$ as
$k \to +\infty$.

We claim that $d>0$. To show this, notice that from \eqref{eq-rescalada} and
\eqref{eq-normal-1}  we have $(-\De)^s_{K_k}
v_k \le C d_k^{(\sigma-1)t}$ in $D^k$, for some constant $C$ not depending on $k$.
By our choice of $\sigma$ and $t$, we get that
\begin{equation}\label{cond-sigma2}
\sigma>\frac{t-2s}{t}.
\end{equation}
That is, we have
\begin{equation}\label{sigma3}
s<(1-\sigma)t<2s,
\end{equation}
so that Lemma \ref{lema-barrera-2} can
be applied to give $\de>0$ and a positive constant $C$ such that
\begin{equation}\label{these1}
v_k(y) \le C d_k(y)^{2s+(\sigma-1) t}, \quad \hbox{when } d_k(y) <\de.
\end{equation}
Moreover, since $1<t<2s$, \eqref{cond-sigma2} in particular implies that
\begin{equation}\label{sigma2}
\sigma>\frac{t-2s}{t-1},
\end{equation}
and, therefore, $-\sigma+2s+(\sigma-1)t =\sigma(t-1)+2s-t >0.$ Thus,  by \eqref{eq-normal-1} we have
$$\mbox{$v_k (y) \le 2 d_k(y)^{\sigma}\le 2 \de^{\sigma-2s-(\sigma-1)t} d_k(y)^{2s+(\sigma-1)t}$ when
$d_k(y)\ge \de$.}$$
Hence $\| v_k\|_0^{(-2s-(\sigma-1)t)}$ is bounded.
We can then use Lemma \ref{lema-regularidad}, with $\theta=(1-\sigma)t$, to obtain that
\begin{equation}\label{these2}
|\nabla v_k(y)| \le C d_k(y)^{2s+(\sigma-1) t-1} \quad \hbox{in } D^k,
\end{equation}
where $C$ is also independent of $k$. Taking inequalities \eqref{these1} and \eqref{these2} 
in \eqref{eq-normal-2}, we deduce
$$
1 \le C d_k(y_k) ^{-\sigma+2s+(\sigma-1)t},
$$
thus, by \eqref{sigma2} we see that $d_k (y_k)$ is bounded away from zero. Hence, by \eqref{cero}, 
$|y_k|$ also is, so that $d>0$, as claimed.

Finally, we can use \eqref{est-c1a} together with Ascoli-Arzel\'a's theorem and a
diagonal argument to obtain that $v_k \to v$ in $C^1_{\rm loc}(\R^N_+)$,
where by \eqref{eq-normal-2}, the function $v$ verifies $d^{-\sigma} v(y_0)+ d^{1-\sigma} |\nabla v(y_0)|=1$ for
some $y_0\in \R^N_+$, hence it is nontrivial and $v(y) \le C y_N ^{2s+(\sigma-1)t}$ if $0<y_N<\de$.
Thus $v\in C(\R^N)$ and $v=0$ outside $\R^N_+$. Passing to the limit in \eqref{eq-rescalada} with
the aid of Lemma 5 in \cite{CS2} and using that $K$ is continuous at zero with $K(0)=1$, we obtain
$$
\left\{
\begin{array}{ll}
(-\De)^s v = v^p & \hbox{in } \R^N_+,\\[0.25pc]
v=0 & \hbox{in } \R^N \setminus \R^N_+.
\end{array}
\right.
$$
Using again bootstrapping and the strong maximum principle we have
$v>0$ and $v\in C^\infty(\R^N_+)$, therefore it is a classical solution.  Moreover, by
Lemma \ref{cotas-pqs}, we also see that $v(y)\le C y_N^{-\frac{2s}{p-1}}$ in $\R^N_+$, so that
$v$ is bounded. This is a contradiction with Theorem 1.2 in \cite{FW}
(see also \cite{QX}), because
we are assuming $p<\frac{N}{N-2s} <\frac{N-1+2s}{N-1-2s}$. 
The proof is therefore concluded.
\end{proof}

\medskip

\section{Existence of solutions}
\setcounter{section}{4}
\setcounter{equation}{0}

This final section is devoted to the proof of our existence results, Theorems
\ref{th-1} and \ref{th-grad}. Both proofs are very similar, only that
that of Theorem \ref{th-grad} is slightly more involved. Therefore we only show this one.

Thus we assume $s>\frac{1}{2}$. Fix $\sigma$ verifying \eqref{cond-sigma} and 
consider the Banach space $E_\sigma$, defined in \eqref{E}, which is an ordered
Banach space with the
cone of nonnegative functions $P=\{u\in E_\sigma:\ u\ge 0 \hbox{ in }\Om\}$. For the
sake of brevity, we will drop the subindex $\sigma$ throughout the rest of the section
and will denote $E$ and $\| \cdot \|$ for the space and its norm.

We will assume that $h$ is nonnegative and verifies the growth condition in the statement of
Theorem \ref{th-grad}:
\begin{equation}\label{hipo-h-2}
h(x,z,\xi) \le C (|z|^r +|\xi|^t), \quad x\in \Om,\ z\in \R, \ \xi \in \R^N,
\end{equation}
where $1<r<p$ and $1<t<\frac{2sp}{2s+p-1}$. Observe that for every $v\in P$ we
have
\begin{equation}\label{h}
h(x,v(x),\nabla v(x)) \le C (\|v\|) d(x)^{(\sigma-1)t}.
\end{equation}
Moreover, by \eqref{sigma3} we may apply Lemma \ref{lema-existencia} to deduce that the problem
$$
\left\{
\begin{array}{ll}
(-\De)_K^s u = v^p + h(x,v,\nabla v) & \hbox{in }\Om,\\[0.35pc]
\ \ u=0 & \hbox{in }\R^N \setminus \Om,
\end{array}
\right.
$$
admits a unique nonnegative solution $u$, with $\|u \|_0^{(-\sigma)}<+\infty$. By Lemma \ref{lema-regularidad}
we also deduce $\| \nabla u \|_0^{(1-\sigma)}<+\infty$. Hence $u\in E$. In this way, we can
define an operator $T: P \to P$ by means of $u=T(v)$. It is clear that nonnegative solutions of \eqref{problema}
in $E$ coincide with the fixed points of this operator.

\medskip

We begin by showing a fundamental property of $T$.

\begin{lema}\label{lema-compacidad}
The operator $T: P  \to P$ is compact.
\end{lema}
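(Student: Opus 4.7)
The plan is to verify the two defining properties of compactness: that $T$ sends bounded subsets of $P$ into relatively compact subsets of $E$, and that $T$ is continuous.

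First I show $T$ is bounded on bounded sets. Fix $R>0$ and let $v \in P$ with $\|v\| \le R$. Then $|v(x)| \le R\, d(x)^\sigma$ and $|\nabla v(x)| \le R\, d(x)^{\sigma-1}$, so in view of \eqref{hipo-h-2} and $\sigma<1$, the right-hand side $f := v^p + h(x,v,\nabla v)$ satisfies $|f(x)| \le C(R)(1+ d(x)^{(\sigma-1)t})$, the gradient term dominating near the boundary. Setting $\theta := (1-\sigma)t$, which lies in $(s,2s)$ by \eqref{sigma3}, one obtains $\|f\|_0^{(\theta)} \le C(R)$. Lemma \ref{lema-existencia} then produces $u=T(v)$ with $\|u\|_0^{(\theta-2s)} \le C(R)$, and Lemma \ref{lema-regularidad} yields $\|\nabla u\|_0^{(\theta-2s+1)} \le C(R)$. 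Because $2s - \theta - \sigma > 0$ (which is \eqref{sigma2} rewritten), these bounds imply $\|u\| \le C(R)$.

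Next, relative compactness. Given a bounded sequence $\{v_k\} \subset P$, set $u_k = T(v_k)$. By the previous step, the right-hand sides $f_k$ are uniformly bounded in $\|\cdot\|_0^{(\theta)}$, while the $u_k$ are uniformly bounded in $L^\infty(\R^N)$. Applying \eqref{est-c1a} on balls $B_{d(x)/2}(x) \subset\subset \Om$ gives uniform $C^{1,\beta}$ bounds on each compact subset of $\Om$. A standard Ascoli-Arzel\'a and diagonal extraction argument yields a subsequence converging in $C^1_{\rm loc}(\Om)$. To promote this to convergence in $E$, observe that near the boundary the uniform bound $|u_k(x)| \le C\, d(x)^{2s-\theta}$ (and the analogous one for $|\nabla u_k|$) gives
\[
d(x)^{-\sigma}|u_k(x)-u_j(x)| + d(x)^{1-\sigma}|\nabla u_k(x) - \nabla u_j(x)| \le C\, d(x)^{2s-\theta-\sigma},
\]
and the strict positivity of $2s-\theta-\sigma$ forces these quantities to tend to $0$ uniformly as $d(x)\to 0$. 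Thus given $\e>0$ one can choose $\delta>0$ so that the weighted differences are below $\e$ on $\Om_\delta$ uniformly in $k,j$, while on $\Om \setminus \Om_\delta$ the $C^1_{\rm loc}$ Cauchy property controls them. Hence $\{u_k\}$ admits a Cauchy subsequence in $E$.

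For continuity, suppose $v_k \to v$ in $E$. Since $\{v_k\}$ is bounded, $\{u_k:=T(v_k)\}$ is relatively compact by the previous step; it suffices to identify the limit of every convergent subsequence as $T(v)$. The convergence $v_k \to v$ in $E$ gives uniform convergence of $v_k$ on $\Om$ (since $|v_k - v|(x) \le d(x)^\sigma \|v_k - v\|$) and uniform convergence of $\nabla v_k$ on compact subsets of $\Om$, so $v_k^p + h(x,v_k,\nabla v_k) \to v^p + h(x,v,\nabla v)$ locally uniformly in $\Om$ by continuity of $h$. If $u_{k_j} \to w$ in $E$, the $L^\infty(\R^N)$ bound on $u_{k_j}$ together with the stability of viscosity solutions under locally uniform convergence of both the solutions and the right-hand sides (Corollary 4.7 in \cite{CS}) forces $w$ to satisfy $(-\De)^s_K w = v^p + h(x,v,\nabla v)$ in $\Om$ with $w=0$ outside $\Om$. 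Uniqueness from Lemma \ref{lema-existencia} then gives $w = T(v)$, completing the argument.

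I expect the boundary analysis in the second paragraph to be the main technical point: the whole argument hinges on the strict inequality $\theta + \sigma < 2s$, which is precisely what the choice \eqref{cond-sigma} of $\sigma$ relative to $t$ is arranged to produce, and it is what turns weighted boundedness into weighted relative compactness.
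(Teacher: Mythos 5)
Your proof is correct. The boundedness and relative-compactness halves follow essentially the same path as the paper: the uniform weighted bound $\|f\|_0^{(\theta)}\le C(R)$ with $\theta=(1-\sigma)t\in(s,2s)$, the interior $C^{1,\beta}$ estimate \eqref{est-c1a} plus Ascoli--Arzel\'a and a diagonal argument, and then the observation that the decay $|u_k|\le Cd^{2s-\theta}$, $|\nabla u_k|\le Cd^{2s-\theta-1}$ combined with $2s-\theta-\sigma=\sigma(t-1)+2s-t>0$ converts weighted boundedness into weighted smallness near $\p\Om$; this is exactly the paper's mechanism. Where you genuinely diverge is in the continuity step. The paper argues directly and quantitatively: it shows $\sup_\Om d^{\theta}\,|h(\cdot,u_n,\nabla u_n)-h(\cdot,u,\nabla u)|\to 0$ for $\theta>(1-\sigma)t$ and then applies Lemmas \ref{lema-existencia} and \ref{lema-regularidad} to the difference $T(u_n)-T(u)$ (exploiting that the solution operator of the linear Dirichlet problem is linear), choosing $(1-\sigma)t<\theta\le 2s-\sigma$ to land back in $E$. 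You instead use the soft route: relative compactness of $\{T(v_k)\}$, identification of every limit point via stability of viscosity solutions (Corollary 4.7 in \cite{CS}) together with the uniqueness assertion of Lemma \ref{lema-existencia}, and the subsequence-of-subsequences principle. Both are valid; the paper's version gives an explicit modulus of continuity for $T$ in the weighted norms, while yours reuses the compactness work and leans on uniqueness, at the price of needing the viscosity stability result a second time (the paper only needs it inside Lemma \ref{lema-existencia}). Your closing remark correctly identifies the strict inequality $\theta+\sigma<2s$, i.e.\ \eqref{sigma2}, as the crux.
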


\begin{proof}
We show continuity first: let $\{u_n\} \subset P$ be such that $u_n\to u$ in $E$. In particular,
$u_n\to u$ and $\nabla u_n\to \nabla u$ uniformly on compact sets of $\Om$, so that the continuity of
$h$ implies
\begin{equation}\label{conv-unif}
h(\cdot,u_n,\nabla u_n) \to h(\cdot,u,\nabla u) \hbox{ uniformly on compact sets of }\Om 
\end{equation}
Moreover, since $u_n$ is bounded in $E$, similarly as in \eqref{h} we also have that
$h(\cdot,u_n,\nabla u_n) \le C d^{(\sigma-1)t}$ in $\Om$, for a constant that does not depend on $n$
(and the same is true for $u$ after passing to the limit). This implies
\begin{equation}\label{claim!}
\sup_\Om d^\theta |h(\cdot,u_n,\nabla u_n)- h(\cdot,u,\nabla u)| \to 0,
\end{equation}
for every $\theta>(1-\sigma)t>s$. Indeed, if we take $\e>0$ then
$$
d^\theta |h(\cdot,u_n,\nabla u_n)- h(\cdot,u,\nabla u)|\leq C d^{\theta-(1-\sigma)t}
\le C \de^{\theta-(1-\sigma)t}\le \e,
$$
if $d\le \de$, by choosing a small $\de$. When $d\ge \de$,
$$
d^\theta |h(\cdot,u_n,\nabla u_n)- h(\cdot,u,\nabla u)|\leq (\sup_\Om d)^\theta
|h(\cdot,u_n,\nabla u_n)- h(\cdot,u,\nabla u)| \le \e,
$$
just by choosing $n\ge n_0$, by \eqref{conv-unif}. This shows \eqref{claim!}.

From Lemmas \ref{lema-existencia} and \ref{lema-regularidad} for every $(1-\sigma)t<\theta<2s$, we obtain
$$
\sup_\Om d^{\theta-2s} |T(u_n)-T(u)| + d^{\theta-2s+1} |\nabla (T(u_n)-T(u))| \to 0.
$$
The desired conclusion follows by choosing $\theta$ such that
$$
(1-\sigma) t < \theta \le 2s-\sigma.
$$
This shows continuity.

To prove compactness, let $\{u_n\} \subset P$ be bounded. As we did before,
$h(\cdot,u_n,\nabla u_n)
\le C d^{(\sigma-1)t}$ in $\Om$. By \eqref{est-c1a} we obtain that
for every $\Om' \subset \subset \Om$ the $C^{1,\beta}$ norm of $T(u_n)$ in $\Om'$ is
bounded. Therefore, we may assume by passing to a subsequence that $T(u_n)\to v$
in $C^1_{\rm loc} (\Om)$.

From Lemmas \ref{lema-existencia} and \ref{lema-regularidad}  we deduce that $T(u_n) \le Cd^{(\sigma-1)t+2s}$,
$|\nabla T(u_n)| \le Cd^{(\sigma-1)t+2s-1}$ in $\Om$, and the same estimates hold for
$v$ and $\nabla v$ by passing to the limit. Hence
$$
\sup_\Om d^{-\sigma} |T(u_n)-v| + d^{1-\sigma} |\nabla (T(u_n)-v)|  \to 0,
$$
which shows compactness. The proof is concluded.
\end{proof}

\bigskip

The proof of Theorem \ref{th-grad} relies in the use of topological degree, with the
aid of the bounds provided by Theorem \ref{cotas-grad}. The essential tool is the
following well-known result (see for instance Theorem 3.6.3 in \cite{Ch}).

\begin{teorema}\label{th-chang}
Suppose that $E$ is an ordered Banach space with positive cone $P$, and $U\subset P$ is an open
bounded set containing 0. Let $\rho>0$ be such that $B_\rho(0)\cap P\subset U$. Assume $T: U\to P$
is compact and satisfies

\begin{itemize}

\item[(a)] for every $\mu\in [0,1)$, we have $u\ne \mu T(u)$ for every $u \in P$ with $\| u \|=\rho$;

\smallskip
\item[(b)] there exists $\psi \in P\setminus \{0\}$ such that $u-T(u) \ne t \psi$, for every
$u\in \p U$, for every $t\ge 0$.

\end{itemize}
Then $T$ has a fixed point in $U \setminus B_\rho(0)$.
\end{teorema}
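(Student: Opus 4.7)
The plan is to invoke the fixed-point index on cones (Amann's extension of Leray--Schauder degree to compact self-maps defined on relatively open subsets of a positive cone $P$), compute the index of $T$ on the inner set $V_1:=B_\rho(0)\cap P$ and on $V_2:=U$ separately, and then exploit additivity. A mismatch between the two indices will force a fixed point of $T$ in the ``annular'' region $V_2\setminus\overline{V_1}$.

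First I would show $i(T,V_1,P)=1$ using the convex homotopy $H(u,\mu):=\mu T(u)$, $\mu\in[0,1]$. Hypothesis (a) says $H(\cdot,\mu)$ is fixed-point free on the relative boundary $\{u\in P:\|u\|=\rho\}$ for every $\mu\in[0,1)$; if, for $\mu=1$, there happens to be a fixed point with $\|u\|=\rho$, the theorem is already proved since such a $u$ lies in $U\setminus B_\rho(0)$. Otherwise the homotopy is admissible on the whole interval, and homotopy invariance together with the normalization $i(0,V_1,P)=1$ yields $i(T,V_1,P)=1$.

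Next I would show $i(T,V_2,P)=0$ via the affine homotopy $H_t(u):=T(u)+t\psi$, $t\ge 0$. Hypothesis (b) guarantees that $H_t$ has no fixed points on $\p U$ for any $t\ge 0$, so the homotopy is admissible. Since $T$ is compact and $U$ is bounded, $T(\overline U)$ lies in some ball of radius $M$; hence any $u\in\overline U$ with $u=T(u)+t\psi$ would satisfy $t\|\psi\|\le M+\sup_{u\in U}\|u\|$. For $t$ sufficiently large $H_t$ therefore has no fixed points in $\overline U$ at all, so its index vanishes, and invariance back to $t=0$ gives $i(T,V_2,P)=0$.

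By additivity/excision of the fixed-point index,
\[
i(T,V_2\setminus\overline{V_1},P) = i(T,V_2,P) - i(T,V_1,P) = -1\ne 0,
\]
producing a fixed point of $T$ in $V_2\setminus\overline{V_1}\subset U\setminus B_\rho(0)$. The delicate step is the second one: the homotopy $H_t$ must remain admissible on $\p U$ for \emph{every} $t\ge 0$ (which is precisely what (b) is designed to ensure) while simultaneously being pushed free of fixed points in $\overline U$ once $t$ is large. The compactness of $T$ together with the boundedness of $U$ is exactly what makes the latter uniform bound possible; after that, the axioms of the index close the argument routinely.
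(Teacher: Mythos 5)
Your argument is correct, and it is the standard proof of this result; the paper itself does not prove Theorem \ref{th-chang} but quotes it from Chang's book (Theorem 3.6.3 in \cite{Ch}), so there is no in-paper proof to compare against. The index computation $i(T,B_\rho(0)\cap P,P)=1$ via the convex homotopy $\mu T$, the computation $i(T,U,P)=0$ via $T+t\psi$ with the uniform bound $t\|\psi\|\le \sup_{\overline U}\|T\|+\sup_U\|u\|$ forcing the index to vanish for large $t$, and the additivity step are exactly the intended route. One small point to tighten: at the endpoint $\mu=1$ of the first homotopy, a fixed point $u$ with $\|u\|=\rho$ lies in $\overline U$ but not automatically in $U$; if it lies on $\partial U$ you should invoke hypothesis (b) with $t=0$ to rule that out, and only then conclude $u\in U\setminus B_\rho(0)$. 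The same observation (no fixed points on $\partial U$, by (b) at $t=0$, and none on the relative sphere $\{u\in P:\|u\|=\rho\}$, by the dichotomy just described) is what legitimizes the additivity/excision step, so it is worth stating explicitly rather than leaving it implicit.
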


\bigskip

The final ingredient in our proof is some knowledge on the principal eigenvalue for the
operator $(-\De)^s_K$. The natural definition of such eigenvalue in our context
resembles that of \cite{BNV} for linear second order elliptic operators, that is:
\begin{equation}\label{eigenvalue}
\la_1 :=\sup\left\{  \la\in \R:
\begin{array}{cc}
\hbox{ there exists } u\in C(\R^N),\ u>0 \hbox{ in } \Om, \hbox{ with } \\[0.25pc]
u=0 \hbox{ in } \R^N\setminus \Om \hbox{ and } (-\De)^s_K u \ge \la u \hbox{ in } \Om
\end{array}
\right\}.
\end{equation}
At the best of our knowledge, there are no results available for the eigenvalues
of $(-\De)^s_K$, although it seems likely that the first one will enjoy the usual
properties (see \cite{QS}).

For our purposes here, we only need to show the finiteness of $\la_1$:

\begin{lema}\label{lema-auto}
$\la_1<+\infty$.
\end{lema}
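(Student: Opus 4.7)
My plan is to bound $\la_1$ above by the variational principal Dirichlet eigenvalue $\mu_1$ of $(-\De)^s_K$ on $\Om$, and then to note that $\mu_1<+\infty$ by elementary comparison with the standard fractional Laplacian.

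Since only the even part of $K$ affects the operator, I would first reduce to the case $K$ even; then $(-\De)^s_K$ is associated with the symmetric bilinear form
\[
\mathcal{E}_K(u,v) := \frac{1}{2}\iint_{\R^N\times\R^N}\frac{(u(x)-u(y))(v(x)-v(y))}{|x-y|^{N+2s}}\,K(x-y)\,dx\,dy
\]
on $H^s_0(\Om)$. By \eqref{elipticidad}, $\mathcal{E}_K$ is equivalent, up to multiplicative constants, to the Gagliardo form of $(-\De)^s$, so the Rayleigh quotient
\[
\mu_1 := \inf\left\{\frac{\mathcal{E}_K(v,v)}{\|v\|_{L^2(\Om)}^2} : v\in H^s_0(\Om),\ v\not\equiv 0\right\}
\]
satisfies $\mu_1\le\La\mu^{*}$, with $\mu^{*}<+\infty$ the principal Dirichlet eigenvalue of $(-\De)^s$ on $\Om$. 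By the compact embedding $H^s_0(\Om)\hookrightarrow L^2(\Om)$, this infimum is attained by a nonnegative $\phi_1\in H^s_0(\Om)\cap C(\R^N)$ (continuity from the regularity theory of \cite{CS}), which is strictly positive in $\Om$ by Lemma \ref{PFM}.

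For the main bound, given any $\la$ in the set defining $\la_1$ with a positive witness $u$, I would test the viscosity inequality $(-\De)^s_K u\ge\la u$ against $\phi_1$. After a standard viscosity-to-weak conversion (using the interior estimate \eqref{est-ca} together with $C^s$ boundary regularity of the Dirichlet problem to place $u$ in $H^s_0(\Om)$), this reads
$\mathcal{E}_K(u,\phi_1)\ge\la\int_\Om u\phi_1\,dx$,
while the symmetry of $\mathcal{E}_K$ and the eigenfunction identity for $\phi_1$ give
$\mathcal{E}_K(u,\phi_1)=\mathcal{E}_K(\phi_1,u)=\mu_1\int_\Om u\phi_1\,dx$.
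Since $\int_\Om u\phi_1>0$, this forces $\la\le\mu_1$, whence $\la_1\le\mu_1<+\infty$.

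The main technical obstacle is the viscosity-to-weak conversion, i.e., showing that the viscosity supersolution inequality implies the associated integral inequality tested against the nonnegative $\phi_1$; this is a standard equivalence for uniformly elliptic nonlocal operators, carried out by approximation using the regularity of $u$. An alternative route that bypasses this conversion is a sliding (Hopf-style) argument: set $t_0:=\inf_\Om u/\phi_1$, positive because nonlocal Hopf-type estimates force both $u$ and $\phi_1$ to behave like $d^s$ near $\p\Om$; assuming $\la>\mu_1$ for contradiction, at an interior touching point $x_1$ of $u-t_0\phi_1\ge 0$ the inequality $(-\De)^s_K(u-t_0\phi_1)(x_1)\le 0$ (which holds at any nonnegative global minimum) contradicts the bound $(-\De)^s_K(u-t_0\phi_1)\ge t_0(\la-\mu_1)\phi_1>0$ that one obtains from linear combination of the two viscosity inequalities.
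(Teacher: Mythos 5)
Your overall strategy (compare the supersolution $u$ in \eqref{eigenvalue} against an explicit function satisfying $(-\De)^s_K w\le Cw$) is the right one, but both of your implementations lean on ingredients that are not available for a general measurable kernel $K$ satisfying only \eqref{elipticidad} and \eqref{continuidad}, and this is precisely the difficulty the paper's proof is built to avoid. In the variational route, the witness $u$ in the definition \eqref{eigenvalue} is merely a continuous function, positive in $\Om$, vanishing outside, and a \emph{viscosity} supersolution; nothing places it in $H^s_0(\Om)$, so $\mathcal{E}_K(u,\phi_1)$ need not even be defined, and the ``standard viscosity-to-weak conversion'' you invoke is not standard for supersolutions of $(-\De)^s_K$ with rough $K$ (the known equivalence results are for $K\equiv 1$ or for genuine solutions with enough regularity). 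The same issue affects the claim that the variational minimizer $\phi_1$ is a positive viscosity solution to which Lemma \ref{PFM} applies. In the sliding route, the positivity of $t_0=\inf_\Om u/\phi_1$ requires a Hopf-type lower bound $u\gtrsim d^s$ near $\p\Om$ for the supersolution $u$ (together with a matching upper bound for $\phi_1$); the paper explicitly warns, right before its proof, that Hopf's principle ``is not well understood for general kernels $K$'' (cf.\ Lemma 7.3 in \cite{RO}), and the barriers available in Section \ref{s2} only give upper bounds of order $d^{2s-\theta}$ with $\theta\in(s,2s)$, which is strictly weaker than $d^s$. So as written, $t_0$ could be $0$ with the infimum escaping to the boundary, and the interior touching point need not exist.

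The paper's fix is to localize so that no boundary information about $u$ is ever needed: fix $B'\subset\subset B\subset\subset\Om$, let $\phi$ solve $(-\De)^s_K\phi=0$ in $B\setminus B'$ with $\phi=1$ in $B'$, $\phi=0$ outside $B$, and let $v$ solve $(-\De)^s_K v=\phi$ in $B$, $v=0$ outside $B$ (both via Theorem 3.1 in \cite{FQ}). Lemma \ref{PFM} gives $\phi,v>0$ in $B$, hence $C_0v\ge\phi$ on the compact set $\overline{B'}$ for some $C_0$, and comparison in $B\setminus B'$ upgrades this to $C_0v\ge\phi$ everywhere, i.e.\ $(-\De)^s_K v\le C_0 v$ in $B$ --- a ``relaxed'' Hopf statement obtained with no boundary asymptotics at all. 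Then, if $\la>C_0$ admitted a witness $u$, the number $\omega=\sup_B v/u$ is finite simply because $u>0$ on the compact set $\overline B$ (again no Hopf lemma), and the strong maximum principle applied to $\omega u-v$ in $B$ contradicts the definition of $\omega$. I would encourage you to rework your sliding argument in this localized form: replace $\phi_1$ by the pair $(\phi,v)$ supported in $B\subset\subset\Om$ and take the extremal quotient over $B$ rather than over $\Om$; that removes every boundary-regularity and weak-formulation issue from your proof.
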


\begin{proof}
We begin by constructing a suitable subsolution.
The construction relies in a sort of ``implicit" Hopf's principle (it is to be noted that Hopf's principle
is not well understood for general kernels $K$ verifying \eqref{elipticidad}; see for instance Lemma 7.3 in
\cite{RO} and the comments after it). However, a relaxed version is enough for our purposes.

Let $B'\subset \subset B\subset \subset \Om$ and consider the unique solution $\phi$ of
$$
\left\{
\begin{array}{ll}
(-\De)_K^s \phi = 0 & \hbox{in } B\setminus B',\\[0.35pc]
\ \ \phi = 1 & \hbox{in } B',\\[0.25pc]
\ \ \phi = 0 & \hbox{in }\R^N \setminus B.
\end{array}
\right.
$$
given for instance by Theorem 3.1 in \cite{FQ}, and the unique viscosity solution of
$$
\left\{
\begin{array}{ll}
(-\De)_K^s v= \phi & \hbox{in } B,\\[0.35pc]
\ \ v = 0 & \hbox{in }\R^N \setminus B.
\end{array}
\right.
$$
given by the same theorem. By Lemma \ref{PFM} we have both $\phi>0$ and $v>0$ in $B$, so that
there exists $C_0>0$ such that $C_0 v \ge \phi$ in $B'$.  Hence by comparison
$C_0 v \ge \phi$ in $\R^N$. In particular,
\begin{equation}\label{extra4_1}
\mbox{$(-\De)^s_K v \le C_0 v$ in $B$.}
\end{equation}
We claim that $\la_1\le C_0$. Indeed, if we assume $\la_1>C_0$, then
there exist $\la>C_0$ and a
positive function $u\in C(\R^N)$ vanishing outside $\Om$ such that
\begin{equation}\label{extra4_2}
\mbox{$(-\De)^s_K u \ge \la u$ in $\Om$.}
\end{equation}
Since $u>0$ in $\overline{B}$, the number
$$
\omega =\sup _B \frac{v}{u}
$$
is finite. Moreover, $\omega u \ge v$ in $\R^N$. Observe that, since we are assuming $\la>C_0$, by
\eqref{extra4_1} and \eqref{extra4_2} it follows that
$$
\left\{
\begin{array}{ll}
(-\De)^s_K (\omega u-v)\ge 0 & \hbox{in } B,\\[0.35pc]
\ \ \omega u-v > 0 & \hbox{in }\R^N \setminus B.
\end{array}
\right.
$$
Hence the strong maximum principle (Lemma \ref{PFM}) implies
$\omega u -v>0$ in $\overline{B}$. However this would imply $(\omega-\e) u >v$ in
$\overline{B}$ for small $\e$, contradicting the definition of $\omega$. Then $\la_1\le C_0$
and the lemma follows.
\end{proof}

\bigskip

Now we are in a position to prove Theorem \ref{th-grad}.

\bigskip

\begin{proof}[Proof of Theorem \ref{th-grad}]
As already remarked, we will show that Theorem \ref{th-chang} is applicable to the operator
$T$ in $P\subset E$.

Let us check first hypothesis (a) in Theorem \ref{th-chang}. Assume we have $u=\mu T(u)$
for some $\mu \in [0,1)$ and $u\in P$. This is equivalent to
$$
\left\{
\begin{array}{ll}
(-\De)_K^s u = \mu (u^p + h(x,u,\nabla u)) & \hbox{in }\Om,\\[0.35pc]
\ \ u=0 & \hbox{in }\R^N \setminus \Om.
\end{array}
\right.
$$
By our hypotheses on $h$ we get that the right hand side of the previous
equation can be bounded by
$$
\begin{array}{rl}
\mu (u^p+h(x,u,\nabla u)) \hspace{-2mm} & \le d^{\sigma p} \| u\| ^p + C_0( d^{\sigma r} \| u\|^r
+ d^{(\sigma-1)t} \| u\| ^t)\\[0.25pc]
& \le C d^{(\sigma-1) t} ( \| u\| ^p + \| u\|^r + \| u\| ^t).
\end{array}
$$
Therefore, by Lemmas \ref{lema-existencia} and \ref{lema-regularidad} and \eqref{sigma3}, we have $\| u \| \le C
( \| u\| ^p + \| u\|^r + \| u\| ^t)$. Since $p,r,t>1$, this implies that $\| u \| > \rho$ for some small
positive $\rho$. Thus there are no solutions of $u=\mu T(u)$ if $\| u \|=\rho$
and $\mu\in [0,1)$, and (a) follows.

To check (b), we take $\psi \in P$ to be the unique solution of the problem:
$$
\left\{
\begin{array}{ll}
(-\De)_K^s \psi = 1 & \hbox{in }\Om,\\[0.35pc]
\ \ \psi = 0 & \hbox{in }\R^N \setminus \Om
\end{array}
\right.
$$
given by Theorem 3.1 in \cite{FQ}.
We claim that there are no solutions in $P$ of the equation $u-T(u)=t \psi$ if $t$ is large enough.
For that purpose we note that this equation is equivalent to
\begin{equation}\label{problema-t}
\left\{
\begin{array}{ll}
(-\De)_K^s u = u^p + h(x,u,\nabla u)+t & \hbox{in }\Om,\\[0.35pc]
\ \ u=0 & \hbox{in }\R^N \setminus \Om.
\end{array}
\right.
\end{equation}
Fix $\mu> \la_1$, where $\la_1$ is given by \eqref{eigenvalue}. Using the nonnegativity of $h$,
and since $p>1$,  there exists a positive constant $C$ such that $u^p + h(x,u,\nabla u)+t \ge \mu u - C +t$.
If $t\ge C$, then $(-\De)^s_K u \ge \mu u$ in $\Om$, which is against the choice of $\mu$ and
the definition of $\la_1$. Therefore $t< C$, and \eqref{problema-t} does not admit
positive solutions in $E$ if $t$ is large enough.

Finally, since $h+t$ also verifies condition \eqref{crec-h-2} for $t\le C$,
we can apply Theorem \ref{cotas-grad} to obtain that the solutions of
\eqref{problema-t} are a priori bounded, that is, there exists $M > \rho$ such that
$\| u\| < M$ for every positive solution of \eqref{problema-t} with $t\ge 0$.
Thus Theorem \ref{th-chang} is applicable with $U=B_M(0)\cap P$ and the existence
of a solution in $P$ follows. This solution is positive by Lemma \ref{PFM}. The proof
is concluded.
\end{proof}

\bigskip

\noindent {\bf Acknowledgements.} B. B. was partially supported by a postdoctoral fellowship
given by Fundaci\'on Ram\'on Areces (Spain) and MTM2013-40846-P, MINECO.
L. D. P. was partially supported by PICT2012 0153 from ANPCyT (Argentina).
J. G-M and A. Q. were partially supported by Ministerio de Ciencia e
Innovaci\'on under grant MTM2011-27998 (Spain) and Conicyt MEC number 80130002.
A. Q. was also partially supported by Fondecyt Grant
No. 1151180 Programa Basal, CMM. U. de Chile and Millennium Nucleus
Center for Analysis of PDE NC130017.


\begin{thebibliography}{99}

\bibitem{AB98} {\sc G.Alberti, G.Bellettini},  {\em A nonlocal anisotropic model for phase transitions. 
I. The optimal profile problem}. Math. Ann. {\bf 310} (3) (1998), 527--560.

\bibitem{AI} {\sc N. Alibaud, C. Imbert},
{\em Fractional semi--linear parabolic equations with unbounded data},
Trans. Amer. Math. Soc. {\bf 361} (2009), no. 5, 2527--2566.

\bibitem{A} {\sc D. Applebaum}, ``L\'evy Processes and Stochastic Calculus", 2nd ed, Cambridge
Studies in Advanced Mathematics {\bf 116}, Cambridge University Press, Cambridge, 2009.

\bibitem{BCI} {\sc G. Barles, E. Chasseigne, C. Imbert}, {\em On the Dirichlet problem for second-order
elliptic integro-differential equations}, Indiana Univ. Math. J. {\bf 57} (2008), 213--146.

\bibitem{barrios2}{\sc B. Barrios, E. Colorado, R. Servadei, F. Soria}, {\em A critical fractional equation with concave-convex nonlinearities}. To appear in Annales Henri Poincar\'e. DOI: 10.1016/j.anihpc.2014.04.003.

\bibitem{barrios4} {\sc B. Barrios, I. De Bonis, M. Medina, I. Peral}, {\em Fractional Laplacian and a singular nonlinearity}
To appear in Open Mathematics.

\bibitem{barrios3}{\sc B. Barrios, M. Medina, I. Peral}, {\em Some remarks on the solvability of non local elliptic problems with the Hardy potential.} To appear in Comm. Contemp. Math. DOI: 10.1142/S0219199713500466.

\bibitem{BNV} {\sc H. Berestycki, L. Nirenberg, S. Varadhan}, {\em The principal eigenvalue and maximum
principle for second-order elliptic operators in general domains}. Comm. Pure Appl. Math. {\bf XLVII} (1994),
47--92.

\bibitem{Be} {\sc J. Bertoin}, ``L\'evy Processes", Cambridge Tracts in Mathematics, {\bf 121}.
Cambridge University Press, Cambridge, 1996.

%\bibitem{BK}{\sc K. Bogdan, T. Komorowski},
%{\em Principal eigenvalue of the fractional Laplacian with a large incompressible drift},
%NoDEA Nonlinear Differential Equations Appl. {\bf 21} (2014), no. 4, 541--566.

\bibitem{BK2}{\sc K. Bogdan, T. Komorowski},
{\em Estimates of heat kernel of fractional Laplacian perturbed by gradient operators},
Comm. Math. Phys. {\bf 271} (2007), no. 1, 179--198.

\bibitem{BoG} {\sc J. P. Bouchaud, A. Georges}, {\em Anomalous diffusion in disordered media},
Statistical mechanics, models and physical applications, Physics reports {\bf 195} (1990).

\bibitem{BCPS} {\sc C. Br\"andle, E. Colorado, A. de Pablo, U. S\'anchez}, {\em A concave-convex elliptic
problem involving the fractional Laplacian}. Proc. Roy. Soc. Edinburgh Sect. A {\bf 143} (2013), no. 1, 39--71.

\bibitem{CSM05} {\sc X. Cabr\'e, J. Sola-Morales,} {\em Layer solutions in a half-space for boundary reactions.} Comm. Pure Appl. Math. 
{\bf 58} (12) (2005), 1678-1732.

\bibitem{CT} {\sc X. Cabr\'e, J. Tan}, {\em Positive solutions of nonlinear problems involving the square
root of the Laplacian}. Adv. Math. {\bf 224} (2010), 2052--2093.

\bibitem{Caf79} {\sc L.  Caffarelli}, {\em Further regularity for the Signorini problem}. Comm. Partial Differential Equations {\bf 4} 
(9) (1979), 1067-1075.

\bibitem{CJS} {\sc L. Caffarelli, J. M. Roquejoffre, Y. Sire}, {\em Variational problems with free boundaries
for the fractional Laplacian}.  J. Eur. Math. Soc. {\bf 12} (2010), no. 5, 1151--1179.


\bibitem{CS3} {\sc L. Caffarelli, L. Silvestre}, {\em An extension problem related to the fractional
Laplacian}, Comm. in Partial Differential Equations {\bf 32} (2007), 1245--1260.

\bibitem{CS} {\sc L. Caffarelli, L. Silvestre}, {\em Regularity theory for fully nonlinear integro
\-differential equations}. Comm. Pure Appl. Math. {\bf 62} (2009), no. 5, 597--638.

\bibitem{CS2} {\sc L. Caffarelli, L. Silvestre}, {\em Regularity results for nonlocal equations by
approximation}. Arch. Rat Mech. Anal.  {\bf 200} (2011), 59--88.

\bibitem{CaV} {\sc L. Caffarelli, L. Vasseur}, {\em Drift diffusion equations with fractional diffusion and
the quasi-geostrophic equation},  Ann. of Math. (2) {\bf 171} (2010), no. 3, 1903--1930.

\bibitem{Ch} {\sc K. C. Chang}, ``Methods of nonlinear analysis". Monographs in Mathematics.
Springer-Verlag New York, 2005.

\bibitem{CL} {\sc H. A. Chang-Lara}, {\em Regularity for fully non linear equations with non local drift}. 
Preprint available at http://arxiv.org/abs/1210.4242

\bibitem{CV} {\sc H. Chen, L. V\'eron}, {\em Semilinear fractional elliptic equations with gradient
nonlinearity involving measures}. J. Funct. Anal. {\bf 266} (2014), 5467--5492.

\bibitem{CLC} {\sc W. Chen,  C. Li, Y. Li}, {\em A direct blowing-up and rescaling argument on the
fractional Laplacian equation}. Preprint available at http://arxiv.org/abs/1506.00582

\bibitem{CLO1} {\sc W. Chen, C. Li, B. Ou}, {\em Classification of solutions for an integral
equation}, Comm. Pure Appl. Math {\bf 59} (2006), 330--343.

\bibitem{CZ} {\sc W. Chen, J. Zhu}, {\em Indefinite fractional elliptic problem and Liouville
theorems}. Preprint available at http://arxiv.org/abs/1404.1640

\bibitem{C} {\sc W. Choi}, {\em On strongly indefinite systems involving the fractional Laplacian}.
Nonlinear Anal. TMA {\bf 120} (2015), 127--153.

\bibitem{CoT} {\sc R. Cont, P. Tankov}, ``Financial Modelling with Jump Processes", Chapman \&
Hall/CRC Financial Mathematics Series, Boca Raton, Fl, 2004.

\bibitem{Co} {\sc P. Constantin}, {\em Euler equations, Navier-Stokes equations and turbulence}, in
``Mathematical Foundation of Turbulent Viscous Flows", Vol. {\bf 1871} of Lecture Notes in Math.,
Springer, Berlin, 2006.

\bibitem{NPV} {\sc E. di Nezza, G. Palatucci, E. Valdinoci}, {\em Hitchhiker's guide to the fractional
Sobolev spaces}, Bull. Sci. Math. {\bf 136} (2012), no. 5, 521--573.

\bibitem{dfv} {\sc S. Dipierro, A. Figalli, E. Valdinoci}, {\em Strongly nonlocal dislocation dynamics in crystals}, Comm. Partial 
Differential Equations {\bf 39} (2014), no. 12, 2351--2387.

\bibitem{FW} {\sc M. M. Fall, T. Weth}, {\em Monotonicity and nonexistence results for some
fractional elliptic problems in the half space}. To appear in Comm. Contemp. Math. Available at
http://arxiv.org/abs/1309.7230

\bibitem{FQ2} {\sc P. Felmer, A. Quaas}, {\em Fundamental solutions and Liouville type theorems for
nonlinear integral operators}. Adv. Math. {\bf 226} (2011), 2712--2738.

\bibitem{FQ} {\sc P. Felmer, A. Quass}, {\em Boundary blow up solutions for fractional elliptic equations}.
Asymptot. Anal. {\bf 78} (2012), no. 3, 123--144.

\bibitem{GS} {\sc B. Gidas, J. Spruck}, {\em A priori bounds for positive solutions of nonlinear
elliptic equations}. Comm. Partial Differential Equations {\bf 6} (1981), 883--901.

\bibitem{GT} {\sc D. Gilbarg, N. S. Trudinger}, ``Elliptic Partial Differential
Equations of Second Order". Springer-Verlag, 1983.

\bibitem{GJL} {\sc P. Graczyk,  T. Jakubowski, T. Luks}, {\em Martin representation and Relative Fatou Theorem
for fractional Laplacian with a gradient perturbation}, Positivity {\bf 17} (2013), no. 4, 1043--1070.

\bibitem{K} {\sc D. Kriventsov}, {\em $C^{1,\al}$ interior regularity for nonlinear nonlocal elliptic
equations with rough kernels}. Comm. Partial Differential Equations {\bf 38} (2013), no. 12, 2081
--2106.


\bibitem{Landkof} { \sc N. Landkof}, ``Foundations of modern potential theory".
Die Grundlehren der mathematischen Wissenschaften, Band 180. Springer-Verlag,
New York-Heidelberg, 1972.

\bibitem{LL} {\sc E. Lindgren, P. Lindqvist}, {\em Fractional eigenvalues}, Calc. Var. Partial Differential
Equations {\bf 49} (2014), no. 1-2, 795--826.

\bibitem{PQS} {\sc P. Pol\'a\v cik, P. Quittner, P. Souplet}, {\em Singularity and decay estimates in superlinear
problems via Liouville-type theorems, I: Elliptic equations and systems}. Duke Math. J.
{\bf 139} (2007), 555--579.

\bibitem{QS} {\sc A. Quaas, A. Salort}, work in progress.

\bibitem{QX} {\sc A. Quaas, A. Xia}, {\em Liouville type theorems for nonlinear elliptic equations and
systems involving fractional Laplacian in the half space}. Calc. Var. Part. Diff. Eqns. {\bf 52} (2015),
641--659.

\bibitem{RO} {\sc X. Ros-Oton}, {\em Nonlocal elliptic equations in bounded domains: a survey}.
Preprint available at http://arxiv.org/abs/1504.04099

\bibitem{ROS} {\sc X. Ros-Oton, J. Serra}, {\em The Dirichlet problem for the fractional Laplacian:
regularity up to the boundary}. J. Math. Pures Appl. {\bf 101} (2014), 275--302.

\bibitem{savin_vald}{\sc O. Savin, E. Valdinoci}, {\em Elliptic PDEs with fibered nonlinearities.} J. Geom. Anal. {\bf 19} (2009), no 2, 420--432.

\bibitem{SV2} {\sc R. Servadei, E. Valdinoci}, {\em Variational methods for non-local operators
of elliptic type}. Discrete Cont. Dyn. Syst. {\bf 33} (2013), 2105--2137.

\bibitem{SV} {\sc R. Servadei, E. Valdinoci}, {\em On the spectrum of two different fractional
operators}. Proc. Roy. Soc. Edinburgh Sect. A {\bf 144} (2014), 831--855.

\bibitem{servadeivaldinociBN}{ \sc R. Servadei, E. Valdinoci}, {\em The Brezis-Nirenberg result for the fractional Laplacian.} Trans. Amer. Math. Soc. {\bf 367} (2015), 67--102. 

\bibitem{servadeivaldinociBNLOW}{ \sc R. Servadei, E. Valdinoci}, {\em A Brezis-Nirenberg result for non-local critical equations in low dimension.} Commun. Pure Appl. Anal. {\bf 12} (6) (2013), 2445--2464.

\bibitem{signorini}{\sc A. Signorini}, {\em Questioni di elasticit\'{a} non linearizzata e semilinearizzata}, Rendiconti di 
Matematica e delle sue applicazioni {\bf 18} (1959), 95--139.

\bibitem{S} {\sc L. Silvestre}, {\em Regularity of the obstacle problem for a fractional power of the Laplace
operator}. Comm. Pure Appl. Math.  {\bf 60} (2007), no. 1, 67--112.

\bibitem{S2} {\sc L. Silvestre}, {\em On the differentiability of the solution to an equation with drift and
fractional diffusion}. Indiana Univ. Math. J. {\bf 61} (2012), no. 2, 557--584.

\bibitem{SVZ} {\sc L. Silvestre, V. Vicol, A. Zlato\v s}, {\em On the loss of continuity for super-critical
drift-diffusion equations}. Arch. Ration. Mech. Anal. {\bf 207} (2013), no. 3, 845--877.

\bibitem{SV08b}{\sc Y. Sire, E. Valdinoci}, {\em  Fractional Laplacian phase transitions and boundary reactions: a geometric 
inequality and a symmetry result.} J. Funct. Anal. {\bf 256} (6) (2009), 1842--1864.

\bibitem{Stein}  {\sc E. M. Stein},  ``Singular integrals and differentiability
properties of functions", Princeton Mathematical Series, No. 30 Princeton
University Press, Princeton, N.J. 1970.

\bibitem{TZ} {\sc V. Tarasov, G. Zaslasvky}, {\em Fractional dynamics of systems with long-range
interaction}, Comm. Nonl. Sci. Numer. Simul. {\bf 11} (2006), 885--889.

\bibitem{toland}{\sc J. Toland}, {\em The Peierls-Nabarro and Benjamin-Ono equations.} J. Funct. Anal. {\bf 145} (1) (1997), 136--150.

\bibitem{W} {\sc J. Wang}, {\em Sub-Markovian $C_{0}$-semigroups generated by fractional Laplacian
with gradient perturbation},  Integral Equations Operator Theory {\bf 76} (2013), no. 2, 151--161.

\bibitem{ZCCY} {\sc R. Zhuo, W. Chen, X. Cui, Z. Yuan}, {\em A Liouville theorem for the fractional Laplacian}.
Preprint available at http://arxiv.org/abs/1401.7402

\end{thebibliography}
\end{document}